\DeclareMathSymbol{\twoheadrightarrow}  {\mathrel}{AMSa}{"10}
                                 \def\uft{{\mathbf u}}
\def\Q{{\mathbb Q}}
\def\Z{{\mathbb Z}}
\def\C{{\mathbb C}}
                         \def\cc{{\mathfrak c}}
\def\RR{{\mathbb R}}
\def\F{{\mathbb F}}
                         \def\p{{\mathfrak p}}
                         \def\b{{\mathfrak b}}
                         \def\bb{{\mathbf b}}
                         \def\cc{{\mathbf c}}
\def\f{{\tilde F}}
                     \def\f0{{\mathfrak f}}
             \def\K{\mathrm{K}}
\def\A8{{\mathbf A}_8}
                                                      \def\Is{\mathrm{Is}}
                                                       \def\Isog{\mathrm{Isog}}
\def\RR{{\mathfrak R}}
\def\RR{{\mathfrak R}}
\def\Perm{\mathrm{Perm}}
\def\Gal{\mathrm{Gal}}
\def\End{\mathrm{End}}
\def\Aut{\mathrm{Aut}}
\def\Discr{\mathrm{Discr}}
\def\ST{{\mathbf S}}
\def\fchar{\mathrm{char}}
            \def\Gp{\mathrm{Gp}}
\def\A{\mathbf{A}}
\def\Oc{{\mathcal O}}
\newtheorem{thm}{Theorem}[section]
\newtheorem{lem}[thm]{Lemma}
\newtheorem{cor}[thm]{Corollary}
\theoremstyle{definition}
\newtheorem{ex}[thm]{Example}
\newtheorem{rem}[thm]{Remark}
\newtheorem{rems}[thm]{Remarks}
        \newtheorem{sect}[thm]{}
\title[Mori trinomials and monodromy]{Galois groups of Mori trinomials and  hyperelliptic curves with big monodromy}
\author[Yuri G.\ Zarhin]{Yuri G.\ Zarhin}
\thanks{This work was partially supported by a grant from the Simons Foundation (\#246625 to Yuri Zarkhin).}
\address{Department of Mathematics, Pennsylvania State University,
University Park, PA 16802, USA}
\email{zarhin\char`\@math.psu.edu}
\begin{document}

\begin{abstract}
We compute the Galois groups for a certain class of polynomials over
the the field of rational numbers  that was introduced by S. Mori
and study the monodromy of corresponding hyperelliptic jacobians.
\end{abstract}

\subjclass[2010]{14H40, 14K05, 11G30, 11G10}

\maketitle

\section{Mori polynomials, their reductions and Galois groups}

We write $\Z$, $\Q$ and $\C$  for the ring of integers, the field of
rational numbers
 and the field of complex numbers
respectively. If $a$ and $b$ are nonzero integers then we write
$(a,b)$ for its (positive) greatest common divisor. If $\ell$ is a
prime then  $\F_{\ell},\Z_{\ell}$ and $\Q_{\ell}$ stand for the
prime finite field of characteristic $\ell$, the ring of
${\ell}$-adic integers and the field of ${\ell}$-adic numbers
respectively.
 If $A$ and $B$ are nonzero integers then we write
$(A,B)$ for its greatest (positive) common divisor.

We consider the subring $\Z\left[\frac{1}{2}\right] \subset \Q$
generated by $1/2$ over $\Z$. We have
$$\Z\subset \Z\left[\frac{1}{2}\right] \subset \Q.$$
If $\ell$ is an odd prime then the principal ideal $\ell
\Z\left[\frac{1}{2}\right]$ is maximal  in
$\Z\left[\frac{1}{2}\right]$ and
$$\Z\left[\frac{1}{2}\right]/\ell\Z\left[\frac{1}{2}\right]=\Z/\ell\Z=\F_{\ell}.$$
If $K$ is a field then we write $\bar{K}$ for its algebraic closure
and denote by $\Gal(K)$ its absolute Galois group $\Aut(\bar{K}/K)$.
If $u(x)\in K[x]$ is a degree $n$ polynomial with coefficients in
$K$ and without  multiple roots then we write
 $\RR_u\subset \bar{K}$ for the $n$-element set of its roots, $K(\RR_u)$ the
 splitting field of $u(x)$ and $\Gal(u/K)=\Gal(K(\RR_u)/K)$  the
 Galois group of $u(x)$ viewed as a certain subgroup of the group
 $\Perm(\RR_u) \cong \ST_n$ of permutations of
 $\RR_u$. As usual, we write $\A_n$ for the {\sl alternating group}, which is the only index $2$ subgroup in the  {\sl full symmetric group} $\ST_n$.

\begin{sect}[{\bf Discriminants and alternating groups}]
\label{discrA}
 We write $\Delta(u)$ for the discriminant of $u$. We have
 $$0 \ne \Delta(u) \in K, \ \sqrt{\Delta(u)} \in K(\RR_u).$$
 It is well known that
$$\Gal(K(\RR_u)/K(\sqrt{\Delta(u)}))=\Gal(K(\RR_u)/K)\bigcap \A_n
\subset \A_n\subset \ST_n=\Perm(\RR_u).$$
In particular, the permutation (sub)group  $\Gal(K(\RR_u)/K(\sqrt{\Delta(u)}))$ does {\sl not} contain transpositions;
$\Delta(u)$ is a {\sl square} in $K$ if and only if
 $\Gal(u/K)$ lies in the {\sl alternating} (sub)group $\A_n\subset \ST_n$.
On the other hand, if $\Gal(u/K)=\ST_n$ then $\Gal(K(\RR_u)/K(\sqrt{\Delta(u)}))= \A_n$.
\end{sect}

  If $n$ is odd and $\fchar(K)\ne 2$ then we write $C_u$ for the genus
 $\frac{n-1}{2}$
 hyperelliptic curve
 $$C_u: y^2=u(x)$$
 and $J(C_u)$ for its jacobian, which is a
 $\frac{n-1}{2}$-dimensional abelian variety over $K$.
We write $\End(J(C_u))$ for the ring of all $\bar{K}$-endomorphisms
of  $J(C_u)$ and $\End_K(J(C_u))$ for the (sub)ring of all its
$K$-endomorphisms. We have
$$\Z\subset \End_K(J(C_u))\subset \End(J(C_u)).$$
About forty years ago S. Mori \cite[Prop. 3 on p. 107]{Mori}
observed that if $n=2g+1$ is odd and $\Gal(f/K)$ is a {\sl doubly
transitive} permutation group then $\End_K(J(C_u))=\Z$. He
constructed \cite[Th. 1 on p. 105]{Mori} explicit examples (in all
dimensions $g$) of polynomials (actually, trinomials) $f(x)$ over
$\Q$ such that $\Gal(f/\Q)$ is doubly transitive and
$\End(J(C_f))=\Z$.

On the other hand, about fifteen years ago the following assertion
was proven by the author \cite{ZarhinMRL}.

\begin{thm}
\label{endo} Suppose that $\fchar(K)=0$ and $\Gal(u/K)=\ST_n$. Then
$$\End(J(C_u))=\Z.$$
\end{thm}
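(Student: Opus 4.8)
The plan is to extract everything from the mod~$2$ Galois representation on the $2$-torsion $J(C_u)[2]$, where the hypothesis $\Gal(u/K)=\ST_n$ is most transparent. Since $n=2g+1$ is odd, $C_u$ has a single point at infinity $\infty$, and the classes $[(\alpha,0)-\infty]$ for $\alpha\in\RR_u$ generate $J(C_u)[2]$ subject only to the relation $\sum_{\alpha\in\RR_u}[(\alpha,0)-\infty]=0$. This identifies $J(C_u)[2]$, as a module over $\Gal(K)$ acting through $\Gal(u/K)\subseteq\Perm(\RR_u)=\ST_n$, with the ``heart'' $W:=(\F_2^{\RR_u})^{0}$, the codimension-one subspace of even-weight vectors inside the permutation module $\F_2^{\RR_u}$; here $\dim_{\F_2}W=n-1=2g=\dim_{\F_2}J(C_u)[2]$, and by hypothesis the image of $\Gal(K)$ in $\Aut(W)$ is all of $\ST_n$.

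Next I would set up the descent from this mod~$2$ datum to $\End(J(C_u))$. Reducing the faithful action of $\End(J(C_u))$ on the Tate module $T_2(J(C_u))$ modulo $2$ produces an $\F_2$-subalgebra $\bar{\mathcal E}\subseteq\End_{\F_2}(W)$ (the image of the reduction map). The crucial observation is that $\bar{\mathcal E}$ is stable under conjugation by $\ST_n$: a geometric endomorphism $\phi$ and all of its Galois conjugates ${}^\sigma\phi$ lie in $\End(J(C_u))$, and on $W$ one has $\overline{{}^\sigma\phi}=\sigma\,\bar\phi\,\sigma^{-1}$. I would then invoke the key representation-theoretic input (discussed below): the subgroup $\ST_n\subseteq\Aut(W)$ is \emph{very simple}, meaning that every $\ST_n$-conjugation-stable $\F_2$-subalgebra of $\End_{\F_2}(W)$ equals either $\F_2\cdot\IT$ or all of $\End_{\F_2}(W)$. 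Thus $\bar{\mathcal E}$ is one of these two.

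The two cases are then disposed of separately. If $\bar{\mathcal E}=\End_{\F_2}(W)=\M_{2g}(\F_2)$, then $\End(J(C_u))\otimes\Z_2$ is a $\Z_2$-order in $\M_{2g}(\Z_2)=\End_{\Z_2}(T_2)$ surjecting onto the full matrix algebra mod~$2$, so by Nakayama's lemma it equals $\M_{2g}(\Z_2)$; hence $\End(J(C_u))$ has $\Z$-rank $4g^2$. But the Betti (equivalently $\ell$-adic) realization embeds $\End(J(C_u))\otimes\Q$ into $\M_{2g}(\Q)$, and rank $4g^2$ would force $\End(J(C_u))\otimes\Q=\M_{2g}(\Q)$, i.e.\ $J(C_u)$ isogenous to a $2g$-th power of an abelian variety, which is absurd in dimension $g$. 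Hence $\bar{\mathcal E}=\F_2\cdot\IT$: every $\phi\in\End(J(C_u))$ acts on $J(C_u)[2]$ as a scalar $c\in\{0,1\}$, so $\phi-c$ kills $J(C_u)[2]$ and therefore $\phi-c=2\psi$ with $\psi\in\End(J(C_u))$. This gives $\End(J(C_u))=\Z+2\End(J(C_u))$; since $\Z$ is integrally closed in the order $\End(J(C_u))$, so that $\End(J(C_u))/\Z$ is torsion-free, the relation $M=2M$ forces this quotient to vanish, yielding $\End(J(C_u))=\Z$. (Alternatively, in characteristic $0$ the divisibility step follows from Faltings' theorem, which identifies $\End(J(C_u))\otimes\Z_2$ with the $2$-adically saturated algebra of Galois-equivariant endomorphisms of $T_2$.)

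The main obstacle is the very-simplicity claim for $\ST_n$ acting on the heart $W$ over $\F_2$, which is where genuine modular representation theory of the symmetric group enters. I would prove it by first establishing that $W$ is an absolutely irreducible $\F_2[\An]$-module (the reduction of the standard $(n-1)$-dimensional representation is irreducible precisely because $n$ is odd), and then ruling out any intermediate conjugation-stable subalgebra using the simplicity of $\An$ together with the primitivity of $W$ (it is not induced from a proper subgroup). Small values of $n$, where exceptional isomorphisms such as $\An\cong\PSL_2(\F_4)$, $\mathbf A_6$, and $\A8\cong\GL_4(\F_2)$ make the heart a natural module for a classical group, must be checked by hand, the genus-one case $n=3$ being classical. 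This group-theoretic step is the heart of the matter, the preceding descent being essentially formal once it is in place.
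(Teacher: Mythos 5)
Your overall route is exactly the proof on record: the paper itself does not prove Theorem \ref{endo} but cites \cite{ZarhinMRL}, and your argument --- identifying $J(C_u)[2]$ with the heart of the permutation module $\F_2^{\RR_u}$, observing that the mod-$2$ reduction of $\End(J(C_u))$ is a subalgebra of $\End_{\F_2}(J(C_u)[2])$ stable under conjugation by the Galois image, and then applying the \emph{very simple} dichotomy --- is precisely Zarhin's mechanism, in the streamlined form of \cite{ZarhinMMJ}. Your descent steps are all sound: Galois-stability of the reduced algebra, the Nakayama argument excluding $\bar{\mathcal E}=\M_{2g}(\F_2)$ (rank $4g^2$ would force $J(C_u)\sim Y^{2g}$ with $\dim Y=1/2$), and the purity step ($\phi-c$ kills $J(C_u)[2]$, hence equals $2\psi$; no appeal to Faltings needed) together with torsion-freeness of $\End(J(C_u))/\Z$.

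There are, however, two genuine problems in the representation-theoretic core, which you correctly identify as the heart of the matter. First, your key lemma is \emph{false} for $n=3$, and so is the theorem as literally stated in that case: for $u=x^3-2$ over $\Q$ one has $\Gal(u/\Q)=\ST_3$, yet $J(C_u)$ is the $j=0$ elliptic curve with $\End(J(C_u))=\Z[\zeta_3]$; correspondingly $\F_4\subset \M_2(\F_2)$ is a proper subalgebra stable under conjugation by $\GL_2(\F_2)\cong\ST_3$, so the heart is \emph{not} very simple there. Thus the genus-one case cannot be ``checked by hand'' --- the correct hypothesis is $n\ge 5$, as in \cite{ZarhinMRL} (and the present paper only invokes the theorem for $g>1$). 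Second, your proposed derivation of very simplicity from absolute irreducibility of the $\F_2[\An]$-module plus simplicity of $\An$ plus primitivity is insufficient: an absolutely irreducible primitive module can still carry stable proper subalgebras arising from tensor decompositions. Indeed for $n=5$ one has $\An\cong\SL_2(\F_4)$ and the heart is a tensor product of the two conjugate $2$-dimensional modules, so $\End(V_1)\otimes \F_2\cdot\IT$ is an $\A_5$-stable subalgebra; very simplicity holds only for the full $\ST_5$, using odd permutations to swap the factors. Zarhin's actual criterion in \cite{ZarhinMMJ} requires, besides non-inducedness, ruling out tensor decompositions into projective representations of dimension $>1$; with that condition added, and the small-$n$ cases restricted to $n\ge 5$, your outline matches the published proof.
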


The aim of this note is to prove that in Mori's examples
$\Gal(f/\Q)=\ST_{2g+1}$.  This gives another proof of the theorem of
Mori  \cite[Th. 1 on p. 105]{Mori}. Actually, we  extend  the class
of Mori trinomials  with  $\End(J(C_f))=\Z$, by dropping one of the
congruence conditions imposed by Mori on the coefficients of $f(x)$.
We also prove that the images of $\Gal(\Q)$ in the automorphism
groups of Tate modules of $J(C_f)$ are {\sl almost} as large as
possible.

\begin{sect}[{\bf Mori trinomials}]
\label{MoriP}
 Throughout this paper, $g,p,b,c$ {\sl are integers
that enjoy the following properties} \cite{Mori}.

\begin{itemize}
\item[(i)] {\sl The number $g$ is a positive integer and $p$ is an odd
prime. In addition, there is a positive integer $N$ such that
$(\frac{p-1}{2})^N$ is divisible by $g$}. This means that every {\sl
prime} divisor of $g$ is also a divisor of $\frac{p-1}{2}$. This
implies that
$$(p,g)=(p,2g)=1.$$
It follows that if $g$ is {\sl even} then $p$ is congruent to $1$
modulo $4$.

\item[(ii)] {\sl The residue $b\bmod p$ is a primitive root of
$\F_p=\Z/p\Z$}; in particular, $(b,p)=1$.

\item[(iii)]
 {\sl The integer $c$ is odd and}
$$(b,c)=(b,2g+1)=(c,g)=1.$$
This implies that $(c,2g)=1$.
\end{itemize}

S. Mori \cite{Mori} introduced and studied the  monic degree
$(2g+1)$ polynomial
$$f(x)=f_{g,p,b,c}(x):=x^{2g+1}-bx-\frac{pc}{4}\in \Z\left[\frac{1}{2}\right][x]\subset \Q[x],$$
 which we call a {\sl Mori trinomial}.  He proved the following
results \cite[pp. 106--107]{Mori}.
\end{sect}

\begin{thm}[Theorem of Mori]
\label{MoriT}
 Let $f(x)=f_{g,p,b,c}(x)$ be a Mori trinomial.
  Then:
\begin{enumerate}
\item[(i)]
The polynomial $f(x)$ is irreducible  over $\Q_2$ and therefore over
$\Q$.
\item[(ii)]
The polynomial $f(x)\bmod p \in \F_p[x]$ is a product $x (x^{2g}-b)$
of a linear factor $x$ and an irreducible (over $\F_p$) degree $2g$
polynomial $x^{2g}-b$.
\item[(iii)]
Let $\Gal(f)$ be the Galois group of $f(x)$ over $\Q$ considered
canonically as a (transitive) subgroup of  the full symmetric group
$\ST_{2g+1}$. Then $\Gal(f)$ is a doubly transitive permutation
group. More precisely, the transitive $\Gal(f)$ contains a
permutation $\sigma$ that is a cycle of length $2g$.
\item[(iv)]
For each odd prime $\ell$ every root of the polynomial $f(x)\bmod
\ell \in \F_{\ell}[x]$ is either simple or double.
\item[(v)]
 Let us
consider the genus $g$ hyperelliptic curve
$$C_f: y^2=f(x)$$
and its jacobian $J(C_f)$, which is a $g$-dimensional abelian
variety over $\Q$. Assume additionally that $c$ is congruent to $-p$
modulo $4$.

Then $C_f$ is a stable curve over $\Z$ and $J(C_f)$ has everywhere
semistable reduction over $\Z$. In addition, $\End(J(C_f)=\Z$.
\end{enumerate}
\end{thm}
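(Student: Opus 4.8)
The plan is to dispose of parts (i)--(iv) by elementary local and combinatorial arguments and to reserve the real work for part (v). For (i) I would read off the $2$-adic Newton polygon of $f$: the constant term $-pc/4$ has valuation $-2$ (since $pc$ is odd), the coefficient $-b$ has valuation $\ge 0$, and the leading term has valuation $0$. The middle point $(1,v_2(b))$ lies strictly above the segment from $(0,-2)$ to $(2g+1,0)$, so the polygon is that single segment, of slope $2/(2g+1)$; as $(2,2g+1)=1$ it has no interior lattice point and $f$ is irreducible over $\Q_2$, hence over $\Q$. For (ii), reduction modulo $p$ annihilates $pc/4$ and yields $f\equiv x(x^{2g}-b)$; the factor $x^{2g}-b$ is irreducible over $\F_p$ by the Vahlen--Capelli criterion, using that the primitive root $b$ is a $q$-th power for no prime $q\mid p-1$ while condition (i) makes every prime divisor of $2g$ divide $p-1$, and that the hypothesis $p\equiv 1\pmod 4$ when $g$ is even kills the exceptional $-4\F_p^4$ case.

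For (iii), the factorization in (ii) is separable, so Dedekind's theorem produces in $\Gal(f)$ a Frobenius of cycle type $(1,2g)$, that is a $2g$-cycle fixing one root; combined with the transitivity furnished by (i), the stabilizer of that root is already transitive on the remaining $2g$ roots, which is double transitivity. For (iv) I would test a putative triple root of $f\bmod\ell$ ($\ell$ odd) against $f$, $f'=(2g+1)x^{2g}-b$ and $f''=2g(2g+1)x^{2g-1}$. When $(\ell,2g(2g+1))=1$ the vanishing of $f''$ forces the root to be $0$, and $f(0)=f'(0)=0$ would force $\ell\mid b$ and $\ell\mid pc$, impossible by $(b,c)=1$ and (for $\ell=p$) by (ii). When $\ell\mid 2g+1$ one has $f'\equiv -b\not\equiv 0$ by $(b,2g+1)=1$, so $f$ is separable; when $\ell\mid g$ any multiple root satisfies $x^{2g}=b$ and hence $f\equiv -pc/4\ne 0$ by $(p,g)=(c,g)=1$. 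So no root has multiplicity exceeding $2$.

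Part (v) is where the effort lies. For the reduction I would argue that $C_f$ has good reduction at every odd $\ell\nmid\Delta(f)$, while at odd $\ell\mid\Delta(f)$ part (iv) guarantees that $f\bmod\ell$ has only double roots, so $C_f$ degenerates to a nodal curve---semistable reduction. The prime $2$ is the interesting one, and this is exactly where $c\equiv -p\pmod 4$ enters: it gives $-pc\equiv 1\pmod 4$, so with $-pc=1+4m$ one has $f(x)=h(x)+1/4$ for $h(x)=x^{2g+1}-bx+m\in\Z[x]$, and the substitution $y=Z+1/2$ turns $C_f$ into the Artin--Schreier model $Z^2+Z=h(x)$, which reduces to a smooth curve of genus $g$ over $\F_2$ (its affine equation has $\partial/\partial Z\equiv 1$, and the single totally ramified point at infinity of odd order $2g+1$ gives genus $g$). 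Hence $C_f$ has good reduction at $2$; together with the above this shows $C_f$ is stable over $\Z$ and $J(C_f)$ is everywhere semistable.

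It remains to prove $\End(J(C_f))=\Z$, which I regard as the main obstacle. I would split it into two steps. Everywhere-semistable reduction forces the geometric endomorphisms of $J(C_f)$ to be defined over an extension of $\Q$ unramified at every prime---because the endomorphisms commute with the unipotent inertia action on the Tate modules and hence descend---so by Minkowski's theorem that extension is $\Q$ itself and $\End(J(C_f))=\End_\Q(J(C_f))$. Double transitivity from (iii) then yields $\End_\Q(J(C_f))=\Z$ by Mori's observation \cite{Mori}. The step requiring the most care is the descent of endomorphisms to $\Q$: one must verify precisely the rigidity statement that semistability at $\ell$ makes the field of definition of the endomorphisms unramified at $\ell$. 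A cleaner but logically later route is to establish $\Gal(f)=\ST_{2g+1}$---the goal of this note---and invoke Theorem~\ref{endo} directly.
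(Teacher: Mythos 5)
Your proposal is correct and follows essentially the same route the paper records for this theorem (which it attributes to Mori and re-derives in generalized form in Theorem \ref{MoriG}): the $2$-adic Newton polygon with Eisenstein--Dumas for (i), Lang's criterion for the irreducibility of $x^{2g}-b$ and a Frobenius of cycle type $(1,2g)$ for (ii)--(iii), and for (v) good reduction at $2$ from the congruence $c\equiv -p \bmod 4$ together with descent of endomorphisms for everywhere-semistable abelian varieties --- the ``rigidity statement'' you flag is precisely Ribet's theorem \cite{Ribet}, cited in Remark \ref{drop4}(3) --- plus Minkowski and Mori's double-transitivity observation. The only divergence is in (iv), where you test a putative triple root against $f^{\prime\prime}$, whereas the paper's Key Lemma \ref{onlyOne} exploits the linear combination $x\bar{u}^{\prime}(x)-n\bar{u}(x)$ and thereby proves the stronger statement (Theorem \ref{main}(i)) that the multiple root is unique, double, and lies in $\F_{\ell}$; your weaker computation suffices for (iv) as stated.
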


\begin{rems}
\label{drop4}
\begin{itemize}
\item[(1)]
The $2$-adic Newton polygon of Mori trinomial $f(x)$ consists of one
{\sl segment} that connects $(0,-2)$ and $(2g+1,0)$, which  are its
only integer points. Now the irreducibility of $f(x)$ follows from
Eisenstein--Dumas Criterion \cite[Cor. 3.6 on p. 316]{Mott},
\cite[p. 502]{Gao}. It also follows that the field extension $\Q(\RR_f)/\Q$ is {\sl ramified} at $2$.
\item[(2)]
If $g=1$ then $2g+1=3$ and the only doubly transitive subgroup of
$\ST_{3}$ is $\ST_{3}$ itself. Concerning the double transitivity of
the Galois group of trinomials of arbitrary degree, see \cite[Th.
4.2 on p. 9 and Note 2 on p. 10]{Cohen}.
\item[(3)]
The additional congruence condition in Theorem \ref{MoriT}(v)
guarantees that $C_f$ has stable (even good) reduction at $2$
\cite[p. 106]{Mori}.
 Mori's proof of the last assertion of Theorem
\ref{MoriT}(v) is based on results of \cite{Ribet} and the equality
$\End_{\Q}(J(C_f))=\Z$; the latter follows from the double
transitivity of Galois groups of Mori trinomials.
\end{itemize}
\end{rems}

\begin{rem}
\label{An} Since a   cycle of {\sl even} length $2g$ is an {\sl
odd} permutation, it follows from Theorem \ref{MoriT}(iii) that
$\Gal(f)$ is {\sl not} contained in  $\A_{2g+1}$. In other words,
$\Delta(f)$ is {\sl not} a square in $\Q$.
\end{rem}

Our first main result is the following statement.

\begin{thm}
\label{main}
 Let $f(x)=f_{g,p,b,c}(x)$ be a Mori trinomial.
\begin{enumerate}
\item[(i)]
If $\ell$ is an odd prime then the polynomial $f(x)\bmod \ell \in
\F_{\ell}[x]$ has, at most, one double root and this root (if
exists) lies in $\F_{\ell}$.
\item[(ii)]
There exists an odd prime $\ell \ne p$ that $f(x)\bmod \ell \in
\F_{\ell}[x]$ has a double root $\bar{\alpha}\in \F_{\ell}$. All
other roots of $f(x)\bmod \ell$ (in an algebraic closure of
$\F_{\ell}$) are simple.
\item[(iii)]
The Galois group $\Gal(f)$  of $f(x)$ over $\Q$ coincides with the
full symmetric group $\ST_{2g+1}$. The Galois (sub)group $\Gal(\Q(\RR_f)/\Q(\sqrt{\Delta(f)}))$ coincides with the alternating group $\A_{2g+1}$.
\item[(iiibis)]
The Galois extension $\Q(\RR_f)/\Q(\sqrt{\Delta(f)})$ is ramified at all prime divisors of $2$. It is unramified at all prime divisors of every odd prime $\ell$.
\item[(iv)]
Suppose that $g>1$. Then  $\End(J(C_f))=\Z$.

\end{enumerate}
\end{thm}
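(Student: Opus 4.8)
My plan is to establish the five assertions in turn, leaning on Theorem \ref{MoriT} and on the discriminant formalism of \ref{discrA}. For (i) I will use that a multiple root of $f\bmod\ell$ is a common zero of $f$ and $f'=(2g+1)x^{2g}-b$: from $f'(\alpha)=0$ one gets $\alpha^{2g+1}=b\alpha/(2g+1)$, and substituting into $f(\alpha)=0$ leaves the single linear condition $-\tfrac{2g}{2g+1}b\alpha=\tfrac{pc}{4}$, so that $\alpha=-\tfrac{(2g+1)pc}{8gb}\in\F_\ell$ is uniquely determined once $\ell\nmid b(2g+1)(2g)$; when $\ell$ divides $b$, $2g+1$ or $2g$, the coprimality conditions (i)--(iii) together with the factorisation $f\equiv x(x^{2g}-b)\bmod p$ of Theorem \ref{MoriT}(ii) force $f\bmod\ell$ to be separable. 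With Theorem \ref{MoriT}(iv) this gives (i). The arithmetic engine for (ii) and (iii) is the trinomial discriminant $\Delta(f)=\pm\bigl((2g+1)^{2g+1}(pc/4)^{2g}-(2g)^{2g}b^{2g+1}\bigr)$; writing $D:=(2g+1)^{2g+1}(pc)^{2g}-(8g)^{2g}b^{2g+1}$ one has $\Delta(f)=\pm D/4^{2g}$, and the hypotheses force the odd integer $D$ to be coprime to $2,p,b,c,2g,2g+1$, so each prime divisor of $D$ is odd, is $\ne p$, and falls under the generic case of (i). Part (ii) is then exactly the existence of such a divisor, i.e. $|D|>1$.

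For (iii) I need two inputs: primitivity and a transposition. Primitivity is free, since $\Gal(f)$ is doubly transitive by Theorem \ref{MoriT}(iii). To manufacture a transposition I will reduce at an odd $\ell\mid D$: by (i) the reduction has a single double root and otherwise simple roots, so the quadratic factor of $f$ over $\Q_\ell$ carrying the two colliding roots has discriminant of valuation $v_\ell(\Delta(f))=v_\ell(D)$, and therefore cuts out a \emph{ramified} quadratic field exactly when $v_\ell(D)$ is odd, in which case the inertia at $\ell$ is the transposition interchanging those two roots. Thus it suffices to show that the odd integer $|D|$ is \emph{not} a perfect square, which I will argue by the parity of $g$. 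If $g$ is even then $p\equiv1\pmod4$ by condition (i), and $D\equiv-(8g)^{2g}b^{2g+1}\pmod p$ is $-b$ times a square, hence a quadratic non-residue (as $b$ is a primitive root and $-1$ a residue); so neither $D$ nor $-D$ is a square. If $g$ is odd then, using $(8g)^{2g}\equiv0$ and that odd squares are $\equiv1\pmod8$, one finds $D\equiv2g+1\equiv3$ or $7\pmod8$, which bars a square when $D>0$; and when $D<0$ one has $\Delta(f)=-D/4^{2g}>0$, a non-square by Remark \ref{An}. In every case $|D|$ is not a perfect square, yielding the transposition. A primitive subgroup of $\ST_{2g+1}$ containing a transposition equals $\ST_{2g+1}$ (classical), which is the first claim, and then $\Gal(\Q(\RR_f)/\Q(\sqrt{\Delta(f)}))=\Gal(f)\cap\A_{2g+1}=\A_{2g+1}$ by \ref{discrA}.

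Part (iiibis) I will read off from the same inertia groups. By Remark \ref{drop4}(1), $f$ is irreducible and totally, tamely ramified over $\Q_2$ of degree $2g+1$, so the inertia at $2$ is cyclic of order $2g+1$, generated by a $(2g+1)$-cycle; being a cycle of odd length it is an even permutation, hence lies in $\A_{2g+1}=\Gal(\Q(\RR_f)/\Q(\sqrt{\Delta(f)}))$, so $2$ ramifies in $\Q(\RR_f)/\Q(\sqrt{\Delta(f)})$. At an odd prime $\ell$, either $\ell\nmid\Delta(f)$ (already unramified in $\Q(\RR_f)/\Q$) or $\ell\mid\Delta(f)$ and, by (i), the inertia is either trivial (the two colliding roots being rational over $\Q_\ell$) or a single transposition; in either case it meets $\A_{2g+1}$ trivially, so $\ell$ is unramified in $\Q(\RR_f)/\Q(\sqrt{\Delta(f)})$.

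Finally, (iv) will be immediate: since (iii) gives $\Gal(f/\Q)=\ST_{2g+1}$, Theorem \ref{endo} applies with $K=\Q$ and yields $\End(J(C_f))=\Z$ for $g>1$. The one genuinely delicate step, and the place where the hypotheses on $(g,p,b,c)$ are really spent, will be producing the transposition, i.e. showing that the odd part $|D|$ of the discriminant is not a perfect square; everything else is either group theory or local ramification bookkeeping.
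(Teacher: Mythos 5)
Your proposal is correct and reaches the same three pillars as the paper --- at most one multiple root modulo an odd prime, always double and rational; an odd prime $\ell\ne p$ where the inertia is a transposition; and double transitivity plus a transposition forcing $\ST_{2g+1}$ --- but by a genuinely different route at the two key points. Where the paper rescales to the integral trinomial $\uft(x)=2^{2g+1}f(x/2)$ and runs everything through the Key Lemma \ref{onlyOne} and Corollary \ref{keycor}, you work with $f$ and $f'$ directly; your linear relation forced by $f(\alpha)=f'(\alpha)=0$ is the same resultant identity as formula (\ref{f2}), and your degenerate-case check (when $\ell$ divides $b$, $2g+1$ or $2g$, or $\ell=p$ via Theorem \ref{MoriT}(ii)) spends exactly the coprimality hypotheses of \ref{MoriP}; you lean on Theorem \ref{MoriT}(iv) for multiplicity two where the paper reproves it via $\bar{u}''(\gamma)\ne 0$. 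The genuine divergence is how the transposition is produced. The paper invokes Remark \ref{An} ($\Delta(f)$ is not a square because $\Gal(f)$ contains a $2g$-cycle) and then, in Corollary \ref{keycor}, uses $D_0\equiv 1\bmod 4$ to show the squarefree part of $D_0$ is $\equiv 1\bmod 4$, hence $\ne\pm 1$, yielding an odd $\ell$ with odd valuation in the discriminant. You instead prove directly that $|D|$ is not a perfect square: for even $g$ by showing $\pm D\equiv \mp b\cdot(\mathrm{square})\bmod p$ is a quadratic non-residue (using that $b$ is a primitive root and $-1$ is a residue since $p\equiv 1\bmod 4$), and for odd $g$ by $D\equiv 2g+1\equiv 3,7\bmod 8$ when $D>0$, falling back on Remark \ref{An} only when $g$ is odd and $D<0$. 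I checked the congruences and the coprimality of $D$ to $p$, $b$, $c$, $2g$, $2g+1$; this is all correct, and for even $g$ your argument is even independent of Mori's $2g$-cycle, though the paper's mod-$4$ trick is shorter and uniform in $g$. Your local analysis at $\ell$ (Hensel factorization $f=vh$, $v_\ell(\Delta(v))=v_\ell(\Delta(f))=v_\ell(D)$ since $\Delta(h)$ and $\mathrm{Res}(v,h)$ are units, ramified quadratic iff the valuation is odd) is Lemma \ref{onlyOne}(2)(c) in different words, and parts (iv) and the odd-$\ell$ half of (iiibis) agree with the paper.

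One step you assert without justification: in (iiibis) you claim the inertia at $2$ is cyclic of order $2g+1$ generated by a $(2g+1)$-cycle. This is true, but it rests on two standard facts you should cite or prove: the Galois closure of a tamely ramified extension is tamely ramified (so the inertia is cyclic of odd order, here of order exactly $2g+1$ by the usual compositum computation), and the inertia orbit of each root has size equal to the ramification index $e(\Q_2(\alpha)/\Q_2)=2g+1$, so a generator is a full cycle. For your purposes a weaker statement suffices --- the inertia contains a nontrivial element of odd order, necessarily an even permutation --- since $2g+1$ divides the ramification index and the wild part is a $2$-group. Alternatively, your own computations already give the paper's cheaper argument: $4^{2g}\Delta(f)=(-1)^g D\equiv(-1)^g(2g+1)\equiv 1\bmod 4$, so $\Q(\sqrt{\Delta(f)})/\Q$ is unramified at $2$, while $\Q(\RR_f)/\Q$ is ramified at $2$ by Remark \ref{drop4}(1); hence the relative extension is ramified at some, and by Galois conjugacy at all, primes above $2$ --- a point (ramification at \emph{all} prime divisors of $2$) your write-up should make explicit in any case.
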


\begin{rem}
  Theorem \ref{main}(iv) was proven by S. Mori
 under an additional assumption that $c$ is congruent to $-p$ modulo
$4$ (see Theorem \ref{MoriT}(v) above).
\end{rem}

\begin{rem}
\label{endoST} Thanks to Theorem \ref{endo}, Theorem \ref{main}(iv)
follows readily from Theorem \ref{main}(iii).
\end{rem}

\begin{rem}
\label{double} Let $g>1$ and suppose we know that $\Gal(f)$ contains
a transposition. Now the double transitivity implies that $\Gal(f)$
coincides with $\ST_{2g+1}$ (see \cite[Lemma 4.4.3 on p.
40]{SerreG}).
\end{rem}

Let $K$ be a field of characteristic zero and
 $u(x) \in K[x]$ be a degree $2g+1$ polynomial without multiple
roots. Then the jacobian $J(C_u)$ is a $g$-dimensional abelian
variety over $K$. For every prime $\ell$ let $T_{\ell}(J(C_u))$ be
the $\ell$-adic Tate module of $J(C_u)$, which is a free
$\Z_{\ell}$-module of rank $2g$ provided with the canonical
continuous action
$$\rho_{\ell,u}:\Gal(K) \to \Aut_{\Z_{\ell}}(T_{\ell}(J(C_u)))$$
of $\Gal(\Q)$ \cite{Mumford,SerreAbelian,ZarhinCEJM}. There is a {\sl Riemann
form}
$$e_{\ell}: T_{\ell}(J(C_u))\times T_{\ell}(J(C_u)) \to \Z_{\ell}$$
that corresponds to the canonical principal polarization on $J(C_u)$
(\cite[Sect. 20]{Mumford}, \cite[Sect. 1]{ZarhinTAMS}) and is a
nondegenerate (even perfect) alternating $\Z_{\ell}$-bilinear form
that satisfies
$$e_{\ell}(\sigma(x),\sigma(y))=\chi_{\ell}(\sigma)e_{\ell}(\sigma(x),\sigma(y)).$$
This implies that the image
$$\rho_{\ell,u}(\Gal(K))\subset \Aut_{\Z_{\ell}}(T_{\ell}(J(C_u)))$$
lies in the (sub)group
$$\Gp(T_{\ell}(J(C_u)),e_{\ell}) \subset \Aut_{\Z_{\ell}}(T_{\ell}(J(C_u)))$$
of symplectic similitudes of $e_{\ell}$
\cite{ZarhinMMJ,ZarhinPLMS,ZarhinTAMS}.

Using results of Chris Hall \cite{Hall} and the author
\cite{ZarhinTAMS}, we deduce from Theorem \ref{main} the following
statement. (Compare it with \cite[Th. 2.5]{ZarhinMMJ} and \cite[Th.
8.3]{ZarhinPLMS}.)

\begin{thm}
\label{main2}
 Let $K=\Q$ and $f(x)=f_{g,p,b,c}(x)\in \Q[x]$ be a Mori
trinomial. Suppose that $g>1$.

Then:

\begin{itemize}
\item[(i)]
 for all primes $\ell$ the image
$\rho_{\ell,f}(\Gal(\Q))$ is an open subgroup of finite index in
$\Gp(T_{\ell}(J(C_f)),e_{\ell})$.
\item[{ii}]
Let $L$ be a number field and $\Gal(L)$ be its absolute Galois
group, which we view as an open subgroup of finite index in
$\Gal(\Q)$. Then for all but finitely many primes $\ell$ the image
$\rho_{\ell,f}(\Gal(L))$ coincides with
$\Gp(T_{\ell}(J(C_f)),e_{\ell})$.
\end{itemize}
\end{thm}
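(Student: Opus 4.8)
The plan is to verify, using the two geometric inputs of Theorem \ref{main}, the hypotheses of the big-monodromy criterion of Hall \cite{Hall} together with the $\ell$-adic openness results of \cite{ZarhinTAMS}. Throughout I regard $\rho_{\ell,f}(\Gal(\Q))$ inside $\Gp(T_\ell(J(C_f)),e_\ell)$ and write $\bar\rho_{\ell,f}$ for the induced representation on $J(C_f)[\ell]=T_\ell(J(C_f))/\ell T_\ell(J(C_f))$. The first input is a transvection. By Theorem \ref{main}(ii) there is an odd prime $\ell_0\ne p$ such that $f(x)\bmod\ell_0$ has a single double root, lying in $\F_{\ell_0}$ by Theorem \ref{main}(i), and all other roots simple. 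Hence the special fibre of $C_f$ at $\ell_0$ has exactly one ordinary double point and no worse singularity, so $J(C_f)$ has semistable reduction at $\ell_0$ of toric dimension exactly $1$. By Grothendieck's semistable reduction theory, for every $\ell\ne\ell_0$ a generator of inertia at a prime over $\ell_0$ acts on $T_\ell(J(C_f))$ as a symplectic transvection $\tau$, namely $x\mapsto x+e_\ell(x,v)\,v$ for a primitive vanishing cycle $v$; in particular $\tau-\IT$ has rank $1$, and $\tau$ reduces to a nontrivial transvection in $\bar\rho_{\ell,f}(\Gal(\Q))$ for every $\ell\ne\ell_0$.

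The second input is the bigness of the residual representation. The action of $\Gal(\Q)$ on $J(C_f)[2]$ factors through $\Gal(f)=\ST_{2g+1}$ acting on the root set $\RR_f$, and $J(C_f)[2]$ is the standard $(2g)$-dimensional $\F_2[\ST_{2g+1}]$-module, which is absolutely irreducible for $g>1$. Combined with $\End(J(C_f))=\Z$ (Theorem \ref{main}(iv)), the results of \cite{ZarhinTAMS} yield that $\bar\rho_{\ell,f}(\Gal(\Q))$ is irreducible for all but finitely many $\ell$. An irreducible subgroup of $\Sp(J(C_f)[\ell],e_\ell)$ that contains a transvection equals $\Sp(J(C_f)[\ell],e_\ell)$; since the similitude character of $\rho_{\ell,f}$ is the cyclotomic character, which is surjective over $\Q$, I obtain $\bar\rho_{\ell,f}(\Gal(\Q))=\Gp(J(C_f)[\ell],e_\ell)$ for almost all $\ell$. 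This is Hall's conclusion \cite{Hall}.

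For part (ii) I would restrict to the finite-index subgroup $\Gal(L)\subset\Gal(\Q)$. For all but finitely many $\ell$ the three ingredients survive restriction: some place of $L$ over $\ell_0$ again carries toric-rank-$1$ semistable reduction, so $\bar\rho_{\ell,f}(\Gal(L))$ still contains a transvection; the index $[\bar\rho_{\ell,f}(\Gal(\Q)):\bar\rho_{\ell,f}(\Gal(L))]$ divides $[\Gal(\Q):\Gal(L)]$, which for large $\ell$ is too small to cut $\Sp(J(C_f)[\ell],e_\ell)$ below $\Sp$; and the restriction of the cyclotomic character to $\Gal(L)$ remains surjective onto $\Z_\ell^{\times}$ for almost all $\ell$. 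Hence $\bar\rho_{\ell,f}(\Gal(L))=\Gp(J(C_f)[\ell],e_\ell)$ for almost all $\ell$, and a standard lifting lemma (the reduction $\Sp(T_\ell(J(C_f)),e_\ell)\to\Sp(J(C_f)[\ell],e_\ell)$ is a Frattini cover for $\ell\ge 5$) upgrades this to $\rho_{\ell,f}(\Gal(L))=\Gp(T_\ell(J(C_f)),e_\ell)$ for almost all $\ell$.

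For part (i) I must treat every prime $\ell$, including $\ell=2$, $\ell=p$, $\ell=\ell_0$, and any exceptional prime where $J(C_f)[\ell]$ is reducible. Here I would invoke the $\ell$-adic statement of \cite{ZarhinTAMS}: for each $\ell$ the transvection $\tau$ spans a one-parameter unipotent subgroup isomorphic to $\Z_\ell$ inside $\rho_{\ell,f}(\Gal(\Q))$, and together with $\End(J(C_f))=\Z$ this forces the Zariski closure of the image to be the full group of symplectic similitudes; its Lie algebra is therefore all of $\Lie(\Gp(T_\ell(J(C_f)),e_\ell))$, and a compact $\ell$-adic Lie subgroup with full Lie algebra is open of finite index. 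This is the main obstacle: Hall's method is intrinsically modulo $\ell$ and gives surjectivity only for almost all $\ell$, so passing to openness at every single prime cannot be read off the residual picture and rests on checking, uniformly in $\ell$, that the single rational node at $\ell_0$ really produces a genuine nontrivial transvection on each $T_\ell(J(C_f))$ and that $\End(J(C_f))=\Z$ pins the algebraic monodromy group down to $\Gp$.
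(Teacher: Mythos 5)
Your proposal is correct and is essentially the paper's own proof: both extract from Theorem \ref{main} exactly the three inputs $\Gal(f/\Q)=\ST_{2g+1}$, $\End(J(C_f))=\Z$, and an odd prime $\ell_0$ at which $J(C_f)$ has semistable reduction of toric dimension $1$, and then conclude (i) by citing \cite[Th. 4.3]{ZarhinTAMS} and (ii) by citing \cite[Th. 1]{Hall}, just as the paper does. Two overstatements occur only in your optional unpacking of those cited black boxes and therefore do not affect the argument, but be aware of them: the inertia transvection is nontrivial modulo $\ell$ only for all but finitely many $\ell$ (its coefficient, essentially the order of the component group at the node, may be divisible by $\ell$), and an irreducible subgroup of $\Sp(J(C_f)[\ell],e_{\ell})$ containing a transvection need not be the full symplectic group --- subfield groups $\Sp_{2g}(\F_{\ell_1})$ with $\F_{\ell_1}$ a proper subfield, and transitive imprimitive groups, are counterexamples --- which is exactly why Hall's theorem involves a reducible/induced/full trichotomy and uses $\End(J(C_f))=\Z$ to exclude the intermediate cases for $\ell\gg 0$.
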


The paper is organized as follows. In Section \ref{monodromy} we
deduce Theorem \ref{main2} from Theorem \ref{main}. In Section
\ref{reduction} we discuss a certain class of trinomials that is
related to Mori polynomials.  Section \ref{Dmori} deals with
discriminants of Mori polynomials.  We prove Theorem \ref{main} in
Section \ref{mainProof}.

{\bf Acknowledgements}. This work was started during my stay at the
Max-Planck-Institut f\"ur Mathematik (Bonn, Germany) in September of
2013 and finished
 during the academic year 2013/2014 when I was
 Erna and Jakob Michael Visiting Professor in the Department of Mathematics  at the Weizmann Institute of Science (Rehovot, Israel):
 the hospitality and support of both Institutes are gratefully acknowledged.

I am grateful to the referee, whose comments helped to improve the exposition.

\section{Monodromy of hyperelliptic jacobians}
\label{monodromy}

\begin{proof}[Proof of Theorem \ref{main2} (modulo Theorem
\ref{main})] By Theorem \ref{main}(iii), $\Gal(f/\Q)$ coincides with
the full symmetric group $\ST_{2g+1}$.  By Theorem \ref{main}(iv),
$\End(J(C_f))=\Z$.
 It follows
from Theorem \ref{main}(i) that there is an odd prime $\ell$ such
that $J(C_f)$ has at $\ell$ a semistable reduction with {\sl toric
dimension} $1$ \cite{Hall}.  Now the assertion (i) follows from
\cite[Th. 4.3]{ZarhinTAMS}. The assertion (ii) follows from
\cite[Th. 1]{Hall}.
\end{proof}

\section{Reduction of certain trinomials}
\label{reduction}
 In order to prove Theorem \ref{main}(i), we will use the
following elementary statement that was inspired by \cite[Remark 2
on p. 42]{SerreG} and \cite[p. 106]{Mori}

\begin{lem}[Key Lemma]
\label{onlyOne} Let
$$u(x)=u_{n,B,C}(x):=x^n+Bx+C \in \Z[x]$$
be a monic  polynomial of degree $n>1$ such that $B\ne 0$ and $C\ne
0$.

\begin{enumerate}
\item[(1)]
If $u(x)$ has a multiple root then  $n$ divides $B$ and $(n-1)$
divides $C$.

\item[(2)]
Let $\ell$ be a prime
 that enjoys the following properties.
\begin{itemize}
\item[(i)]
$(B,C)$ is not divisible by $\ell$.
\item[(ii)]
$(n,B)$  is not divisible by $\ell$.
\item[(iii)]
$(n-1,C)$ is not divisible by $\ell$.
\end{itemize}
Suppose that $u(x)$ has no multiple roots. Let us consider the
polynomial
$$\bar{u}(x):=u(x)\bmod \ell \in \F_{\ell}[x].$$
Then:
\begin{itemize}
\item[(a)]
 $\bar{u}(x)$ has, at most, one multiple root in an algebraic closure of $\F_{\ell}$.
\item[(b)]
 If such a multiple root say, $\gamma$, does exist, then $\ell$ does not divide
 \newline
 $n(n-1)BC$ and $\gamma$ is a double root of $\bar{u}(x)$. In addition, $\gamma$ is a nonzero element of
 $\F_{\ell}$.
 \item[(c)]
If such a multiple root  does exist then either the field extension
$\Q(\RR_u)/\Q$ is unramified at $\ell$ or a corresponding inertia
subgroup at $\ell$ in
$$\Gal(\Q(\RR_u)/\Q)=\Gal(u/\Q) \subset \Perm(\RR_u)$$
is generated by a transposition. In both cases the Galois extension $\Q(\RR_u)/\Q(\sqrt{\Delta(u)})$ is unramified at all prime divisors of $\ell$.
\end{itemize}
\end{enumerate}
\end{lem}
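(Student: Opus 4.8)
The plan is to treat the four assertions in turn, with the explicit shape of a multiple root of a trinomial as the common engine. If $\gamma$ is a multiple root of $u$ (over any field in which $n$ and $n-1$ are invertible), then $u(\gamma)=0$ and $u'(\gamma)=n\gamma^{n-1}+B=0$; eliminating $\gamma^{n}$ between these two relations yields $B\gamma\cdot\frac{n-1}{n}+C=0$, so that $\gamma$ is forced to equal the single value $\gamma=\frac{-Cn}{B(n-1)}$. This already pins down the multiple root uniquely whenever it exists, which is the crux of the ``at most one'' phenomenon in (a).

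For part (1) I would invoke the classical discriminant formula for a trinomial,
$$\Delta(u)=(-1)^{n(n-1)/2}\bigl(n^{n} C^{\,n-1}+(-1)^{n-1}(n-1)^{n-1}B^{n}\bigr).$$
A multiple root means $\Delta(u)=0$, i.e. $n^{n}C^{n-1}=(-1)^{n}(n-1)^{n-1}B^{n}$ as an identity of integers. Since $(n,n-1)=1$, this gives $n^{n}\mid B^{n}$ and $(n-1)^{n-1}\mid C^{n-1}$, and a prime-by-prime comparison of valuations upgrades these to $n\mid B$ and $(n-1)\mid C$. (The same relation drops out of the explicit value of $\gamma$ above after clearing denominators, so no new input is needed.)

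For parts (2)(a),(b) I would run through the reduction $\bar u$ case by case using conditions (i)--(iii). If $\ell\mid n$, then (ii) forces $\ell\nmid B$, so $\bar u'=B$ is a nonzero constant and $\bar u$ is separable; if $\ell\mid B$, then any root of $\bar u'$ must be $0$, which is not a root of $\bar u$ because (i) gives $\ell\nmid C$; and if $\ell\mid(n-1)$, then the relation $B\gamma\frac{n-1}{n}+C=0$ collapses to $C\equiv 0$, contradicting (iii). Hence a multiple root can occur only when $\ell\nmid n(n-1)B$, in which case the formula above shows it is the unique element $\gamma=\frac{-Cn}{B(n-1)}\in\F_\ell$; moreover $\gamma\ne 0$ forces $\ell\nmid C$, so $\ell\nmid n(n-1)BC$. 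Finally $\bar u''(\gamma)=n(n-1)\gamma^{n-2}\ne 0$ shows $\gamma$ is exactly a double root, settling (a) and (b).

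The heart of the matter is (c), the ramification statement, and that is where I expect the real work. The plan is to pass to $\Z_\ell$ and factor. Since $\bar u=(x-\gamma)^{2}\bar h$ with $\bar h$ separable and $\bar h(\gamma)\ne 0$, Hensel's lemma for coprime factorizations lifts this to $u=U_1U_2$ over $\Z_\ell$ with $U_1$ monic quadratic, $\bar U_1=(x-\gamma)^{2}$, and $\bar U_2=\bar h$ separable. The factor $U_2$, being separable modulo $\ell$, is \'etale over $\Q_\ell$ and contributes only unramified local fields. All ramification is therefore concentrated in the quadratic $U_1$; as $\ell$ is odd, $\Q_\ell[x]/(U_1)$ is at worst tamely ramified, so the local extension it cuts out is either split (unramified) or a ramified quadratic. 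In the ramified case the inertia group has order $2$ and acts on $\RR_u$ by interchanging the two roots lifting $\gamma$ while fixing the roots coming from $U_2$; that is, inertia is generated by a transposition. This gives the dichotomy that $\Q(\RR_u)/\Q$ is either unramified at $\ell$ or has inertia generated by a transposition. For the last clause, recall from the discriminant section that $\Gal(\Q(\RR_u)/\Q(\sqrt{\Delta(u)}))=\Gal(u/\Q)\cap\A_n$; since a transposition $\tau$ is odd, $\langle\tau\rangle\cap\A_n$ is trivial, so $\Q(\RR_u)/\Q(\sqrt{\Delta(u)})$ is unramified at every prime above $\ell$ in both cases. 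The main obstacle is justifying that \emph{all} ramification is captured by the single quadratic factor and that tameness (odd $\ell$) caps inertia at order $2$; once the Hensel factorization is in hand, this is standard local field theory.
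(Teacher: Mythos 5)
Your proposal is correct in substance and follows essentially the same route as the paper's proof: the discriminant identity for (1); pinning down the unique candidate multiple root $\gamma=-n\bar{C}/((n-1)\bar{B})$ for (a),(b) (your elimination of $\gamma^n$ is exactly the paper's linear combination $x\bar{u}^{\prime}(x)-n\bar{u}(x)=-(n-1)\bar{B}x-n\bar{C}$); and for (c) the Hensel lift of the coprime factorization $\bar{u}=(x-\gamma)^2\bar{h}$, with the separable factor contributing only unramified extensions, the lifted quadratic contributing an at-most-quadratic extension whose nontrivial inertia element is the transposition swapping the two roots above $\gamma$, and the final clause deduced from the fact that $\Gal(\Q(\RR_u)/\Q(\sqrt{\Delta(u)}))\subset \A_n$ contains no transpositions. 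Your case analysis ruling out $\ell\mid n$, $\ell\mid B$, $\ell\mid(n-1)$ via derivatives is a mild repackaging of the paper's use of the mod-$\ell$ discriminant relation; the content is identical.

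One step, however, is circular as written: you say ``moreover $\gamma\ne 0$ forces $\ell\nmid C$,'' but nothing in your argument establishes $\gamma\ne 0$, and both $\gamma\ne 0$ and $\ell\nmid C$ are among the assertions of (b); moreover your double-root step $\bar{u}^{\prime\prime}(\gamma)=n(n-1)\gamma^{n-2}\ne 0$ itself requires $\gamma\ne 0$ once $n>2$. The repair is one line: if $\bar{C}=0$, your formula gives $\gamma=0$, and then $\bar{B}=\bar{u}^{\prime}(0)=0$ (using $n>1$), contradicting condition (i); hence $\ell\nmid C$ and $\gamma\ne 0$. (The paper gets $\bar{C}\ne 0$ instead from the relation $n^n\bar{C}^{n-1}=\pm(n-1)^{n-1}\bar{B}^n$.) Two smaller remarks on (c): the quadratic factor can also be \emph{inert}, cutting out the unramified quadratic extension of $\Q_{\ell}$, so the dichotomy is ``split or inert (both unramified) versus ramified quadratic'' rather than ``split versus ramified'' --- this does not affect the conclusion, and the paper explicitly allows the unramified quadratic case; and the appeal to tameness is unnecessary, since the inertia group of any quadratic extension of local fields has order at most $2$ irrespective of wildness.
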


\begin{rem}
\label{discr} The discriminant
$$\Discr(n,B,C):=\Delta(u_{n,B,C})$$
 of
$u_{n,B,C}(x)$ is given by the formula
$$\Discr(n,B,C)=(-1)^{n(n-1)/2} n^n C^{n-1}+ (-1)^{(n-1)(n-2)/2} (n-1)^{n-1} B^n$$
\cite[Ex. 834]{FS}.
\end{rem}

\begin{rem}
\label{trivial}
In the notation  of Lemma \ref{onlyOne}, assume that $\bar{u}(x)$ has {\sl no} multiple roots, i.e., $\Delta(u)$ is {\sl not} divisible by $\ell$. Then obviously  $\Q(\RR_u)/\Q$ is {\sl unramified} at $\ell$. This implies that $\Q(\RR_u)/\Q(\sqrt{\Delta(u)})$ is {\sl unramified} at all prime divisors of $\ell$.
\end{rem}

\begin{proof}[Proof of Lemma \ref{onlyOne}]

{\sl Proof of} (1). Since $u(x)$ has a multiple root, its
discriminant
$$\Delta(u)=(-1)^{n(n-1)/2} n^n C^{n-1}+ (-1)^{(n-1)(n-2)/2} (n-1)^{n-1}
B^n=0.$$ This implies that
$$n^n C^{n-1}= \pm (n-1)^{n-1}
B^n.$$ Since $n$ and $(n-1)$ are relatively prime, $n^n\mid B^n$
and $(n-1)^{n-1} \mid C^{n-1}$. This implies that $n\mid B$ and $
(n-1)\mid C$.

{\sl Proof of} (2).
 We have
$$\bar{u}(x):=x^n+\bar{B}x+\bar{C} \in \F_{\ell}[x]$$ where
$$\bar{B}=B\bmod \ell \in \F_{\ell}, \ \bar{C}=C\bmod \ell \in
\F_{\ell}.$$ The condition (i) implies that either
 $\bar{B}\ne 0$ or $\bar{C} \ne 0$.
 The condition (ii) implies that
 if $\bar{B}= 0$ then $n \ne 0$ in $\F_{\ell}$.
 The condition (iii)
 implies that if $(n-1)=0$ in $\F_{\ell}$ then $\bar{C} \ne 0$
and  $n \ne 0$ in $\F_{\ell}$.
We have
$$\Delta(\bar{u})=(-1)^{n(n-1)/2} n^n \bar{C}^{n-1}+ (-1)^{(n-1)(n-2)/2} (n-1)^{n-1}
\bar{B}^n=0$$
and therefore
\begin{equation}\label{f1}
n^n \bar{C}^{n-1}= \pm (n-1)^{n-1}
\bar{B}^n.
\end{equation}
This implies that if $(n-1)=0$ in $\F_{\ell}$  then $ \bar{C}=0$, which is not the case. This proves that $(n-1)\ne 0$ in $\F_{\ell}$. On the other hand, if $\bar{B}= 0$ then $\bar{C} \ne 0$ and $n \ne 0$ in $\F_{\ell}$.  Then  (1) implies that $\bar{C} = 0$ and we get a contradiction that proves that $\bar{B}\ne 0$.  If $n=0$ in $\F_{\ell}$ then $n-1 \ne 0$ in $\F_{\ell}$ and  formula \ref{f1}  implies that  $\bar{B}= 0$, which is not the case. The obtained contradiction proves that $n \ne 0$ in $\F_{\ell}$. If $\bar{C} = 0$ then   formula (\ref{f1})  implies that  $\bar{B}= 0$, which is not the case. This proves that $\ell$ does {\sl not} divide
 $n(n-1)BC$.

The derivative of $\bar{u}(x)$ is
$$\bar{u}^{\prime}(x)=n x^{n-1}+\bar{B}.$$
We have
\begin{equation}\label{f2}
x \cdot \bar{u}^{\prime}(x)-n\cdot \bar{u}(x)=-(n-1)\bar{B}x-n\bar{C}.
\end{equation}
 Suppose $\bar{u}(x)$ has a multiple root  $\gamma$ in an
algebraic closure of $\F_{\ell}$. Then
$$\bar{u}(\gamma)=0, \ \bar{u}^{\prime}(\gamma)=0, \
n \cdot\gamma\cdot \bar{u}^{\prime}(\gamma)-n\cdot\bar{u}(\gamma)=0.$$
Using formula (\ref{f2}), we conclude that
$$0=\gamma \cdot\bar{u}^{\prime}(\gamma)-n\cdot\bar{u}(\gamma)= -(n-1)\bar{B}\gamma -n\bar{C}, \
\gamma=-\frac{n\bar{C}}{(n-1)\bar{B}}\in \F_{\ell}. $$
This implies that $\gamma \ne 0$.

Notice that the second derivative
$\bar{u}^{\prime\prime}(x)=n(n-1)x^{n-2}$. This implies that
$$\bar{u}^{\prime\prime}(\gamma)=n(n-1)\gamma^{n-2} \ne 0.$$
It follows that $\gamma$ is a {\sl double} root of $\bar{u}(x)$.
This ends the proof of (a) and (b).

In order to prove (c), notice that there exists a monic degree
$(n-2)$ polynomial $\bar{h}(x)\in \F_{\ell}[x]$ such that
$$\bar{u}(x)=(x-\gamma)^2 \cdot \bar{h}(x).$$
Clearly, $\gamma$ is {\sl not} a root of $\bar{h}(x)$ and therefore
$\bar{h}(x)$ has no multiple roots and relatively prime to
$(x-\gamma)^2$.\footnote{Compare with \cite[Lemma 1 on p. 231]{Osada}.}
 By Hensel's Lemma, there exist monic  polynomials
$$h(x), v(x) \in \Z_{\ell}[x], \ \deg(h)=n-2, \deg(v)=2$$ such that
$$u(x)=v(x) h(x)$$ and
$$\bar{h}(x)=h(x)\bmod \ell, \ (x-\gamma)^2=v(x) \bmod \ell.$$
This implies that the splitting field $\Q_{\ell}(\RR_h)$ of $h(x)$
(over $\Q_{\ell}$) is an unramified extension of $\Q_{\ell}$ while
the splitting field $\Q_{\ell}(\RR_u)$ of $u(x)$ (over $\Q_{\ell}$)
is obtained from $\Q_{\ell}(\RR_h)$ by adjoining to it two
(distinct) roots say, $\alpha_1$ and $\alpha_2$ of quadratic $v(x)$.
Clearly,  $\Q_{\ell}(\RR_u)$  either coincides with
$\Q_{\ell}(\RR_h)$ or with a certain quadratic extension of
$\Q_{\ell}(\RR_h)$, ramified or unramified. It follows that the
inertia subgroup $I$ of
$$\Gal(\Q_{\ell}(\RR_u)/\Q_{\ell})\subset \Perm(\RR_u)$$ is either
trivial or is generated by the {\sl transposition} that permutes
$\alpha_1$ and $\alpha_2$ (and leaves invariant every  root of
$h(x)$). In the former  case $\Q(\RR_u)/\Q$ is unramified at $\ell$
while in the latter one an inertia subgroup in
$$\Gal(\Q(\RR_u)/\Q)\subset \Perm(\RR_u)$$
that corresponds to $\ell$ is generated by a transposition. However, the permutation subgroup $\Gal(\Q(\RR_u)/\Q(\sqrt{\Delta(u)}))$ does not contain transpositions (see Sect. \ref{discrA}). This implies that $\Q(\RR_u)/\Q(\sqrt{\Delta(u)})$ is {\sl unramified} at all prime divisors of $\ell$.
\end{proof}

\begin{ex}
\label{xnx1} Let us consider the polynomial
$$u(x)=u_{n,-1,-1}(x)=x^n-x-1\in \Q[x]$$
over the field $K=\Q$.
Here $B=C=-1$ and the
conditions of Lemma \ref{onlyOne} hold for all primes $\ell$. It is
known  that $u(x)$ is irreducible \cite{Selmer}, its Galois group over $\Q$ is $\ST_n$ \cite[Cor. 3 on p. 233]{Osada} and there exists
a prime $\ell$ such that $u(x)\bmod \ell$ acquires a multiple root \cite[Remark 2 on p. 42]{SerreG}.
Clearly, the discriminant $\Delta(u)=\Discr(n,-1,-1)$ of $u(x)$ is
an {\sl odd} integer and therefore such an $\ell$ is {\sl odd}. It
follows from Lemma \ref{onlyOne} that $u(x)\bmod \ell$ has exactly
one multiple root and its multiplicity is $2$.

Let  $n=2g+1$ be an odd integer $\ge 5$ and
$$u(x)=u_{2g+1,-1,-1}(x)=x^{2g+1}-x-1\in \Q[x].$$ Let us consider the $g$-dimensional jacobian
$J(C_u)$ of the hyperelliptic curve $C_u:y^2=x^{2g+1}-x-1$. Since
$\Gal(u/\Q)=\ST_{2g+1}$, Theorem \ref{endo} tells us that
$\End(J(C_u))=\Z$. Now the same arguments as in Section
\ref{monodromy} prove that:

\begin{itemize}
\item[(i)]
 For all primes $\ell$ the image
$$\rho_{\ell,u}(\Gal(\Q))\subset \Gp(T_{\ell}(J(C_u)),e_{\ell})$$
is an open subgroup of finite index in
$\Gp(T_{\ell}(J(C_u)),e_{\ell})$.
\item[(ii)]
Let $L$ be a number field and $\Gal(L)$ be its absolute Galois
group, which we view as an open subgroup of finite index in
$\Gal(\Q)$. Then for all but finitely many primes $\ell$ the image
$$\rho_{\ell,u}(\Gal(L))\subset \Gp(T_{\ell}(J(C_u)),e_{\ell})$$
coincides with $\Gp(T_{\ell}(J(C_u)),e_{\ell})$.
\end{itemize}
\end{ex}

\begin{cor}[Corollary to Lemma \ref{onlyOne}]
\label{keycor} Let
$$u(x)=u_{n,B,C}(x):=x^n+Bx+C \in \Z[x]$$
be a monic  polynomial of degree $n>1$ without multiple roots such
that both $B$ and $C$ are nonzero integers that enjoy the following
properties.

\begin{itemize}
\item[(i)]
$(B,C)$ is either $1$ or a power of $2$.
\item[(ii)]
$(n,B)$ is either $1$ or a power of $2$.
\item[(iii)]
$(n-1,C)$ is either $1$ or a power of $2$.
\end{itemize}

Suppose that the discriminant $D=\Discr(n,B,C)= 2^{2M}\cdot D_0$
where $M$ is a nonnegative integer and $D_0$ is an integer
% that is not a square and
such that
$$D_0\equiv 1 \bmod 4.$$
Assume also that $D$ is not a square.
 Then:

\begin{itemize}
\item[(a)]
The quadratic extension $\Q(\sqrt{D})/\Q$ is unramified ar $2$. For all odd primes $\ell$ the Galois extension
$\Q(\RR_u)/\Q(\sqrt{D})$ is unramified at every prime divisor of $\ell$.

\item[(b)]
 There exists an odd
prime $\ell$ that enjoys the following properties.
\begin{itemize}
\item[(i)]
$\ell$ divides $D_0$ and
$$u(x)\bmod \ell \in \F_{\ell}[x]$$
has exactly one multiple root and its multiplicity is $2$. In
addition, this root lies in $\F_{\ell}$.
\item[(ii)]
The field extension $\Q(\RR_u)/\Q$ is ramified at $\ell$ and the
Galois group
$$\Gal(\Q(\RR_u)/\Q)=\Gal(u/\Q)\subset \Perm(\RR_u)$$
contains a transposition. In particular, if $\Gal(u/\Q)$ is doubly
transitive then
$$\Gal(u/\Q)=\Perm(\RR_f)\cong \ST_n$$
and
$$\Gal(\Q(\RR_u)/\Q(\sqrt{D}))=\A_n.$$
\end{itemize}
\end{itemize}
\end{cor}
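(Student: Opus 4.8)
The plan is to deduce everything from the Key Lemma (Lemma \ref{onlyOne}) applied to odd primes dividing the discriminant, after first analyzing the quadratic subfield $\Q(\sqrt D)\subseteq\Q(\RR_u)$. The starting observation is that $D=2^{2M}D_0$ with $2^{2M}$ a perfect square, so $\Q(\sqrt D)=\Q(\sqrt{D_0})$, and $D_0$ is not a perfect square because $D$ is not. Writing $D_0=d\cdot m^2$ with $d$ its squarefree part, the congruence $D_0\equiv 1\bmod 4$ forces $m$ to be odd (otherwise $D_0\equiv 0\bmod 4$), whence $d\equiv 1\bmod 4$; in particular $d$ is odd and $d\neq 1$. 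Thus $\Q(\sqrt D)=\Q(\sqrt d)$ with $d\equiv 1\bmod 4$ squarefree, so this quadratic field has odd discriminant and is unramified at $2$. This gives the first assertion of (a).

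For the remaining assertion of (a) I would note that the three divisibility hypotheses (i)--(iii) of the corollary guarantee that \emph{every} odd prime $\ell$ satisfies conditions (i)--(iii) of Lemma \ref{onlyOne}(2): an odd $\ell$ cannot divide any of $(B,C)$, $(n,B)$, $(n-1,C)$ when each of these is $1$ or a power of $2$. Hence for each odd $\ell$ either $\bar u=u\bmod\ell$ has no multiple root, in which case Remark \ref{trivial} applies, or it has one, in which case Lemma \ref{onlyOne}(2)(c) applies; in both cases $\Q(\RR_u)/\Q(\sqrt{\Delta(u)})=\Q(\RR_u)/\Q(\sqrt D)$ is unramified at every prime above $\ell$.

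For (b), I would take $\ell$ to be any odd prime factor of the squarefree part $d$; such a factor exists since $d$ is odd with $|d|\geq 3$. Because $\ell\mid d\mid D_0\mid D=\Delta(u)$, the reduction $\bar u$ acquires a multiple root, and Lemma \ref{onlyOne}(2)(a),(b) then yield that $\bar u$ has exactly one multiple root, that it is a double root, and that it lies in $\F_\ell$, which is (b)(i). For ramification I would use that $\ell$ occurs to an odd power in $D_0$, hence divides $d$, so $\ell$ ramifies in $\Q(\sqrt d)=\Q(\sqrt D)$; since $\Q(\sqrt D)\subseteq\Q(\RR_u)$, the inertia group at a prime of $\Q(\RR_u)$ over $\ell$ surjects onto the nontrivial inertia group of $\Q(\sqrt D)/\Q$ and is therefore nontrivial. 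By Lemma \ref{onlyOne}(2)(c) a nontrivial inertia group at $\ell$ must be generated by a transposition, so $\Q(\RR_u)/\Q$ is ramified at $\ell$ and $\Gal(u/\Q)$ contains a transposition. Finally, if $\Gal(u/\Q)$ is doubly transitive, then a doubly transitive permutation group containing a transposition is the full symmetric group (see \cite[Lemma 4.4.3 on p. 40]{SerreG} and Remark \ref{double}), giving $\Gal(u/\Q)=\Perm(\RR_u)\cong\ST_n$; the equality $\Gal(\Q(\RR_u)/\Q(\sqrt D))=\A_n$ is then immediate from Sect. \ref{discrA}.

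I expect the only genuine subtlety to be the ramification step in (b): one must select $\ell$ dividing $D_0$ to an \emph{odd} power (equivalently, dividing the squarefree part $d$) rather than an arbitrary odd prime divisor of $D$, and then propagate ramification upward through the quadratic subextension $\Q(\sqrt D)\subseteq\Q(\RR_u)$ in order to rule out the unramified alternative in Lemma \ref{onlyOne}(2)(c). The existence of such an $\ell$ is precisely where the hypotheses ``$D$ is not a square'' and ``$D_0\equiv 1\bmod 4$'' enter, the latter also being exactly what makes $\Q(\sqrt D)/\Q$ unramified at $2$ in part (a).
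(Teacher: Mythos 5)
Your proposal is correct and follows essentially the same route as the paper's own proof: the same identification $\Q(\sqrt{D})=\Q(\sqrt{D_0})$ with $D_0\equiv 1\bmod 4$ giving unramifiedness at $2$, the same observation that hypotheses (i)--(iii) make Lemma \ref{onlyOne}(2) (with Remark \ref{trivial}) applicable to every odd prime, and the same extraction of an odd prime $\ell$ from the squarefree part of $D_0$ (your $D_0=dm^2$ with $d\equiv 1\bmod 4$, $d\ne\pm 1$ is exactly the paper's $D_0=S^2S_0$ with $S_0\equiv 1 \bmod 4$, $S_0\ne\pm 1$), followed by propagating ramification from $\Q(\sqrt{D})$ up to $\Q(\RR_u)$ to force the transposition alternative in Lemma \ref{onlyOne}(2)(c) and concluding via Remark \ref{double} and Sect.\ \ref{discrA}. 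The only cosmetic difference is that the paper verifies $\ell^{2T+1}\,\Vert\, D$ explicitly where you invoke the standard criterion that odd primes dividing the squarefree part $d$ ramify in $\Q(\sqrt{d})$.
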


\begin{proof}
Clearly, $D_0$ is {\sl not} a square and
$$\Q(\sqrt{D})=\Q(\sqrt{D_0})$$
is a quadratic field.
Since $D_0$ is congruent to $1$ modulo $4$, the quadratic extension $\Q(\sqrt{D_0})/\Q$ is {\sl unramified} at $2$, which proves  the first assertion of (a).
The conditions of Lemma \ref{onlyOne}(2) hold for all odd primes
$\ell$. Now the second assertion of (a) follows from Remark \ref{trivial} and Lemma \ref{onlyOne}(2)(c).

Let us start to prove (b).
 There are nonzero integers $S$ and
$S_0$ such that $D_0=S^2 S_0$ and $S_0$ is square-free. Clearly,
both $S$ and $S_0$ are odd. Since
$$D=2^{2M}\cdot D_0=2^{2M}\cdot S^2 S_0=\left(2^M S\right)^2 S_0$$
is {\sl not} a square, $S_0 \ne 1$. Since $S$ is odd,  $S^2\equiv 1 \bmod
4$. Since $D_0\equiv 1 \bmod 4$, we obtain that $S_0\equiv 1 \bmod
4$. It follows that $S_0 \ne -1$. We already know that $S_0 \ne 1$.
This implies that there is a prime $\ell$ that divides $S_0$. Since
$S_0$ is odd and square-free, $\ell$ is also odd and $\ell^2$ does
not divide $S_0$. Let $T$ be the nonnegative integer such that
$\ell^T\mid\mid S$. Then $\ell^{2T+1}\mid\mid 2^{2M}S^2 S_0$, and
therefore $\ell^{2T+1}\mid\mid D$. This implies that the quadratic
field extension $\Q(\sqrt{D})/\Q$ is {\sl ramified} at $\ell$. Since
$$\Q\subset \Q(\sqrt{D})\subset \Q(\RR_u),$$
the field extension $\Q(\RR_u)/\Q$ is also ramified at $\ell$.
 Since $\ell\mid D$, the polynomial
 $$u(x) \bmod \ell \in \F_{\ell}[x]$$
 has a multiple root.
 Now the result follows from Lemma \ref{onlyOne} combined with Remark \ref{double}.
\end{proof}

\section{Discriminants of Mori trinomials}
\label{Dmori}
 Let $$f(x)=f_{g,p,b,c}(x)=x^{2g+1}-bx-\frac{pc}{4}$$
be a Mori trinomial. Following Mori \cite{Mori}, let us consider the
polynomial
$$\uft(x)=2^{2g+1} f(x/2)=x^{2g+1}-2^{2g}b x-2^{2g-1}pc=u_{n,B,C}(x) \in \Z[x]\subset
\Q[x]$$ with
$$n=2g+1, B=-2^{2g}b, C= -2^{2g-1}pc.$$

\begin{rems}
\label{Doddg}
\begin{itemize}
\item[(i)]
Clearly, $f(x)$ and $\uft(x)$ have the same splitting field and
Galois group. It is also clear that
$$\Delta(\uft)=2^{(2g+1)2g}\cdot \Delta(f)=\left[2^{(2g+1)g}\right]^2 \cdot \Delta(f).$$
In particular, $\Delta(\uft)$ is {\sl not} a square, thanks to
Remark \ref{An}.

\item[(ii)]
By Theorem \ref{MoriT}(i,iii), the polynomial $f(x)$ is irreducible
over $\Q$ and its Galois group is {\sl doubly transitive}. This
implies that $\uft(x)$ is irreducible over $\Q$ and its Galois group
over $\Q$ is also {\sl doubly transitive}. (See also Theorem \ref{MoriG}(i,ii) below.)

\item[(iii)]
For all $g$ the hyperelliptic curves $C_f$ and $C_{\uft}$ are
biregularly isomorphic  over $\Q(\sqrt{2})$. It follows that the
jacobians $J(C_{\uft})$ and $J(C_f)$ are also isomorphic  over
$\Q(\sqrt{2})$. In particular, $\End(J(C_{\uft}))=\End(J(C_f))$.
\end{itemize}
\end{rems}

Clearly, the conditions of Lemma \ref{onlyOne} hold for
$u(x)=\uft(x)$ for all odd primes $\ell$. The discriminant
$\Delta(\uft)$ of $\uft(x)$ coincides with
$$\Discr(n,B,C)=(-1)^{(2g+1)2g/2} (2g+1)^{2g+1} [-2^{2g-1}pc]^{2g}+ (-1)^{2g(2g-1)/2} (2g)^{2g} [-2^{2g}b]^{2g+1}.$$
It follows that
$$\Delta(\uft)=(-1)^{g}2^{2g(2g-1)}\left[(2g+1)^{2g+1}(pc)^{2g}-2^{6g}g^{2g}b^{2g+1}\right].$$
This implies that

\begin{equation}
\label{f3}
\Delta(\uft) =2^{2[g(2g-1)]} D_0
\end{equation}
where
$$D_0=(-1)^{g}\left\{(2g+1) \left[(2g+1)^{g} (pc)^{g}\right]^2-2^{6g}  g^{2g} b^{2g+1}\right\}.$$
Clearly, $D_0$ is an {\sl odd} integer that is {\sl not} divisible
by $p$. It is also clear that
 $D_0$ is congruent to $(-1)^{g}(2g+1)$ modulo $4$
(because every odd square is congruent to $1$ modulo $4$). This
implies that
\begin{equation}
\label{f4} D_0 \equiv 1 \bmod 4
\end{equation}
for all $g$.

\section{Proof of Theorem \ref{main}}
\label{mainProof} Let us apply Lemma \ref{onlyOne}(ii) to
$$\uft(x)=2^{2g+1} f(x/2)=x^{2g+1}-2^{2g}bx-2^{2g-1}pc.$$
We obtain that for each odd prime $\ell$ the polynomial
$$\uft(x)\bmod \ell \in \F_{\ell}[x]$$
has, at most, one multiple root; in addition, this root is double
and lies in $\F_{\ell}$.  Applying to $\uft(x)$ Corollary
\ref{keycor} combined with  formulas (\ref{f3}) and (\ref{f4}) of Sect.
\ref{Dmori}, we conclude that there exists an odd $\ell \ne p$ such
that $\uft(x)\bmod \ell$ has exactly one multiple root; this root is
double and  lies in $\F_{\ell}$. In addition, $\Gal(\uft/\Q)$
coincides with $\ST_{2g+1}$, because it is doubly transitive. Now
the assertions (i) and (ii) follow readily from the equality
$$f(x)\bmod \ell =\frac{\uft(2x)}{2^{2g+1}} \bmod \ell.$$
that holds for all odd primes $\ell$.

 By Remarks \ref{Doddg},
$\Gal(f/\Q)=\Gal(\uft/\Q)$ and therefore also coincides with
$\ST_{2g+1}$, which implies (in light of Section \ref{discrA}) that
$\Gal(\Q(\RR_f)/\Q(\sqrt{\Delta(f)}))=\A_{2g+1}$. This proves (iii).
Now Remark \ref{endoST} implies that $\End(J(C_f))=\Z$. This proves
(iv). In order to prove (iiibis), first notice that the Galois
extension $\Q(\RR_f)/\Q$ is ramified at $2$ (Remark \ref{drop4}(1) )
while $\Q(\sqrt{\Delta(f)})=\Q(\sqrt{\Delta(u)})$ is unramified at
$2$ over $\Q$ in light of formulas (3) and (4) in Sect. \ref{Dmori}
(and Corollary \ref{keycor}(a)). This implies that
$\Q(\RR_f)/\Q(\sqrt{\Delta(f)})$ is ramified at some prime divisor
of $2$. Since all the field extensions involved are Galois,
$\Q(\RR_f)/\Q(\sqrt{\Delta(f)})$ is actually ramified at {\sl all}
prime divisors of $2$.   This proves the first assertion of
(iiibis). The second assertion of (iiibis) follows from Corollary
\ref{keycor}(a). This proves (iiibis).

\section{Variants and Complements}

Throughout this section,  $K$ is a number field. We write $\Oc$ for
the ring of integers in $K$. If $\b$ is a maximal ideal in $\Oc$
then we write $k(\b)$ for the (finite) residue field $\Oc/\b$. As usual, we
call $\fchar(k(\b))$ the residual characteristic of $\b$. We write
$K_{\b}$ for the $\b$-adic completion  of $K$ and
$$\Oc_{\b}\subset K_{\b}$$
for the ring of $\b$-adic integers in the field $K_{\b}$. We
consider the subring $\Oc\left[\frac{1}{2}\right]\subset K$
generated by $\frac{1}{2}$ over $\Oc$. We have
$$\Oc \subset \Oc\left[\frac{1}{2}\right]\subset K.$$
If $\b \subset \Oc$ is a maximal ideal in $\Oc$ with odd residual
characteristic then
$$\Oc \subset \Oc\left[\frac{1}{2}\right]\subset \Oc_{\b},$$
the ideal $\b \Oc\left[\frac{1}{2}\right]$ is a maximal ideal in
$\Oc\left[\frac{1}{2}\right]$ and
$$k(\b)=\Oc/\b=\Oc\left[\frac{1}{2}\right]/\b
\Oc\left[\frac{1}{2}\right]=\Oc_{\b}/\b \Oc_{\b}.$$

Lemma \ref{onlyOne}(ii) and its proof admit the following
straightforward generalization.

\begin{lem}
\label{onlyOneii}
 Let
$$u(x)=u_{n,B,C}(x):=x^n+Bx+C \in \Oc[x]$$
be a monic  polynomial of degree $n>1$ such that  $B\ne 0$ and $C\ne
0$.  Let $\b$ be a maximal ideal in $\Oc$  that
 enjoys the following properties.
\begin{itemize}
\item[(i)]
$B \Oc+C\Oc + \b=\Oc$.
\item[(ii)]
$n\Oc+B\Oc + \b=\Oc$.
\item[(iii)]
$(n-1)\Oc+C\Oc + \b=\Oc$.
\end{itemize}

Suppose that $u(x)$ has no multiple roots. Let us consider the
polynomial
$$\bar{u}(x):=u(x)\bmod \b \in k(\b)[x].$$
Then:
\begin{itemize}
\item[(a)]
 $\bar{u}(x)$ has, at most, one multiple root in an algebraic closure of $k(\b)$.
\item[(b)]
 If such a multiple root say, $\gamma$, does exist, then
$$n(n-1)BC \not\in \b$$
 and $\gamma$ is a double root of $\bar{u}(x)$. In
addition, $\gamma$ is a nonzero element of
 $k(\b)$.
 \item[(c)]
If such a multiple root  does exist then either the field extension
$K(\RR_u)/K$ is unramified at $\b$ or a corresponding inertia
subgroup at $\b$ in
$$\Gal(K(\RR_u)/K)=\Gal(u/\K) \subset \Perm(\RR_u)$$
is generated by a transposition.  In both cases the Galois extension  $K(\RR_u)/K(\sqrt{\Delta(u)})$ is unramified at all prime divisors of $\b$.
\end{itemize}
\end{lem}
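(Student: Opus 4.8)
The plan is to reproduce the proof of Lemma~\ref{onlyOne}(2) almost line for line, replacing the prime $\ell$ by the maximal ideal $\b$, the field $\F_\ell$ by the residue field $k(\b)$, the ring $\Z_\ell$ by $\Oc_\b$, and the completion $\Q_\ell$ by $K_\b$. The first step is to check that hypotheses (i)--(iii) carry exactly the same content as the divisibility hypotheses of the Key Lemma. Since $\b$ is maximal, $k(\b)=\Oc/\b$ is a field, and an identity such as $B\Oc+C\Oc+\b=\Oc$ holds if and only if the reductions $\bar B,\bar C\in k(\b)$ do not both vanish; thus (i), (ii), (iii) say precisely that neither $\{B,C\}$, nor $\{n,B\}$, nor $\{n-1,C\}$ lies in $\b$, which are the exact analogues of $\ell\nmid(B,C)$, $\ell\nmid(n,B)$, $\ell\nmid(n-1,C)$.

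Next I would run the discriminant argument over $k(\b)$. Writing $\bar u(x)=x^n+\bar Bx+\bar C$, the formula of Remark~\ref{discr} is a polynomial identity over any commutative ring, so the vanishing of $\Delta(\bar u)$ gives $n^n\bar C^{\,n-1}=\pm(n-1)^{n-1}\bar B^{\,n}$ in $k(\b)$. Combining this with (i)--(iii), with the fact that $k(\b)$ is a domain, and with the observation that $n$ and $n-1$ differ by $1$ and so cannot both lie in $\b$, one forces $n$, $n-1$, $\bar B$, $\bar C$ to be nonzero in $k(\b)$; in particular $n(n-1)BC\notin\b$. Evaluating the identity $x\,\bar u'(x)-n\,\bar u(x)=-(n-1)\bar Bx-n\bar C$ (formula (\ref{f2})) at a multiple root $\gamma$ then yields $\gamma=-n\bar C/((n-1)\bar B)\in k(\b)$, which determines $\gamma$ uniquely (hence at most one multiple root), shows $\gamma\ne0$, and, since $\bar u''(\gamma)=n(n-1)\gamma^{\,n-2}\ne0$, shows $\gamma$ is exactly a double root. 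This gives (a) and (b).

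For (c) I would factor $\bar u(x)=(x-\gamma)^2\,\bar h(x)$ with $\bar h(\gamma)\ne0$, so that the separable polynomial $\bar h$ is coprime to $(x-\gamma)^2$ in $k(\b)[x]$. As $\Oc_\b$ is a complete discrete valuation ring, Hensel's Lemma lifts this to $u=v\,h$ over $\Oc_\b$ with $v$ monic quadratic reducing to $(x-\gamma)^2$ and $h$ monic of degree $n-2$ reducing to $\bar h$. Separability of $\bar h$ makes $K_\b(\RR_h)/K_\b$ unramified, and $K_\b(\RR_u)$ is obtained from it by adjoining the two roots of $v$; hence the inertia subgroup of $\Gal(K_\b(\RR_u)/K_\b)$ lies in $\Gal(K_\b(\RR_u)/K_\b(\RR_h))$ and is either trivial or generated by the transposition interchanging those two roots. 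Identifying this with the inertia group at $\b$ inside $\Gal(K(\RR_u)/K)\subset\Perm(\RR_u)$ gives the first assertion of (c). For the last assertion, Sect.~\ref{discrA} (valid over any field of characteristic $\ne2$, in particular the number field $K$) tells us that $\Gal(K(\RR_u)/K(\sqrt{\Delta(u)}))=\Gal(K(\RR_u)/K)\cap\A_n$ contains no transposition, so its intersection with the inertia group (trivial or a single transposition) is trivial; therefore $K(\RR_u)/K(\sqrt{\Delta(u)})$ is unramified at every prime above $\b$.

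The whole argument is a transcription, and I expect no real obstacle. The only genuinely new bookkeeping, relative to the case $\Oc=\Z$, is recasting the three coprimality conditions as the ideal identities (i)--(iii) and checking that Hensel's Lemma together with the unramifiedness of the residually separable factor $h$ go through over the complete local ring $\Oc_\b$; both are routine, and the ramification argument for $K(\RR_u)/K(\sqrt{\Delta(u)})$ is word-for-word the one already used in Lemma~\ref{onlyOne}(c).
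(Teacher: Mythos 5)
Your proposal is correct and matches the paper's own proof essentially line for line: the paper itself presents Lemma~\ref{onlyOneii} as a straightforward transcription of Lemma~\ref{onlyOne}(2), rerunning the discriminant identity and the relation $x\,\bar u'(x)-n\,\bar u(x)=-(n-1)\bar Bx-n\bar C$ over $k(\b)$, lifting $\bar u=(x-\gamma)^2\bar h$ via Hensel's Lemma over $\Oc_{\b}$, and concluding via the transposition-free subgroup $\Gal(K(\RR_u)/K(\sqrt{\Delta(u)}))$ from Sect.~\ref{discrA}. Your reformulation of the hypotheses (i)--(iii) as the nonvanishing of the relevant pairs of residues in $k(\b)$ is exactly how the paper uses them, so there is nothing to correct.
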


\begin{rem}
\label{trivialK}
In the notation of Lemma \ref{onlyOneii}, suppose that $\bar{u}(x)$ has {\sl no} multiple roots, i.e., $\Delta(u) \not \in \b$. Then clearly the Galois extension $K(\RR_u)/K$ is unramified at $\b$.
\end{rem}

\begin{proof}
We have
$$\bar{u}(x):=x^n+\bar{B}x+\bar{C} \in k(\b)[x]$$ where
$$\bar{B}=B\bmod \b \in k(\b), \ \bar{C}=C\bmod \b \in
k(\b).$$
The condition (i) implies that either
 $\bar{B}\ne 0$ or $\bar{C} \ne 0$.
 The condition (ii) implies that
 if $\bar{B}= 0$ then $n \ne 0$ in $k(\b)$.
 It follows that if $\bar{B}= 0$ then $n\bar{C} \ne 0$.

 The condition (iii)
 implies that if $(n-1)=0$ in $k(\b)$ then $\bar{C} \ne 0$ (and, of course, $n \ne 0$ in
 $k(\b)$). On the other hand, if $\bar{C}=0$ then $(n-1) \ne 0$ in
 $k(\b)$.

 Suppose $\bar{u}(x)$ has a multiple root  $\gamma$ in an
algebraic closure of $k(\b)$. Then as in the proof of Lemma \ref{onlyOne}(2),
$$0=\Delta(\bar{u})=(-1)^{n(n-1)/2} n^n \bar{C}^{n-1}+ (-1)^{(n-1)(n-2)/2} (n-1)^{n-1}
\bar{B}^n=0.$$
This implies that
\begin{equation}
\label{f5}
n^n \bar{C}^{n-1}= \pm (n-1)^{n-1}
\bar{B}^n.
\end{equation}
This implies that if $(n-1)=0$ in $k(\b)$  then $ \bar{C}=0$, which is not the case. This proves that $(n-1)\ne 0$ in $k(\b)$. On the other hand, if $\bar{B}= 0$ then $\bar{C} \ne 0$ and $n \ne 0$ in $k(\b)$.  Then  formula (\ref{f5}) implies that $\bar{C} = 0$ and we get a contradiction that proves that $\bar{B}\ne 0$.  If $n=0$ in $k(\b)$ then $n-1 \ne 0$ in $k(\b)$ and    formula (\ref{f5})  implies that  $\bar{B}= 0$, which is not the case. The obtained contradiction proves that $n \ne 0$ in $k(\b)$. If $\bar{C} = 0$ then   formula (\ref{f5})  implies that  $\bar{B}= 0$, which is not the case. This proves that the maximal ideal $\b$ does {\sl not} contain
 $n(n-1)BC$.

On the other hand, we have
  as in the proof of Lemma \ref{onlyOne}(2) that
$$x \cdot\bar{u}^{\prime}(x)-n\cdot\bar{u}(x)=-(n-1)\bar{B}x-n\bar{C}$$
and therefore
$$-(n-1)\bar{B}\gamma-n\bar{C}=0.$$
 It follows that
$$\gamma=-\frac{n\bar{C}}{(n-1)\bar{B}}$$
is a {\sl nonzero} element of $k(\b)$.  The second derivative $\bar{u}^{\prime\prime}(x)=n(n-1)x^{n-2}$
 and
$$\bar{u}^{\prime\prime}(\gamma)=n(n-1)\gamma^{n-2} \ne 0.$$
It follows that $\gamma$ is a {\sl double} root of $\bar{u}(x)$.
This proves (a) and (b).

In order to prove (c),  notice that as in the proof of Lemma
\ref{onlyOne}(ii)(c), there exists a monic degree $(n-2)$ polynomial
$\bar{h}(x)\in k(\b)[x]$ such that
$$\bar{u}(x)=(x-\gamma)^2 \cdot \bar{h}(x)$$
and $\bar{h}(x)$ and $(x-\gamma)^2$ are relatively prime.  By
Hensel's Lemma, there exist monic  polynomials
$$h(x), v(x) \in \Oc_{\b}[x], \ \deg(h)=n-2, \deg(v)=2$$ such that
$$u(x)=v(x) h(x)$$ and
$$\bar{h}(x)=h(x)\bmod \b, \ (x-\gamma)^2=v(x) \bmod \b.$$
This implies that the splitting field $K_{\b}(\RR_h)$ of $h(x)$
(over $K_{\b}$) is an unramified extension of $\K_{\b}$ while the
splitting field $\K_{\b}(\RR_u)$ of $u(x)$ (over $\K_{\b}$) is
obtained from $\K_{\b}(\RR_h)$ by adjoining to it two (distinct)
roots say, $\alpha_1$ and $\alpha_2$ of quadratic $v(x)$. The field
$K_{\b}(\RR_u)$   coincides either with $K_{\b}(\RR_h)$ or with a
certain quadratic extension of $\K_{\b}(\RR_h)$, ramified or
unramified. It follows that the inertia subgroup $I$ of
$$\Gal(K_{\b}(\RR_u)/K_{\b})\subset \Perm(\RR_u)$$ is either
trivial or is generated by the {\sl transposition} that permutes
$\alpha_1$ and $\alpha_2$ (and leaves invariant every  root of
$h(x)$). In the former  case $K(\RR_u)/K$ is unramified at $\b$
while in the latter one an inertia subgroup in
$$\Gal(K(\RR_u)/K)\subset \Perm(\RR_u)$$
that corresponds to $\b$ is generated by a transposition. In both cases the Galois (sub)group $\Gal(K(\RR_u)/K(\sqrt{\Delta(u)}))$ does not contain transpositions (see Sect. \ref{discrA}). This implies that $K(\RR_u)/K(\sqrt{\Delta(u)})$ is {\sl unramified} at all prime divisors of $\b$.
\end{proof}

Corollary \ref{keycor} admits the following partial generalization.

\begin{lem}
\label{ODDclass} Let $K$ be a number field and $\Oc$ be its ring of
integers
 Let
$$u(x)=u_{n,B,C}(x):=x^n+Bx+C \in \Oc[x]$$
be a monic  polynomial without multiple roots of degree $n>1$ such
that both $B$ and $C$ are not zeros.  Suppose that there is a
nonnegative integer $N$ such that
$$2^N \Oc \subset B \Oc+C\Oc,  \ 2^N \Oc \subset n\Oc+B\Oc, \ 2^N \Oc\subset  (n-1)\Oc+C\Oc.$$
Suppose that  there is a nonnegative integer $M$ such that the
discriminant $D:=\Delta(u)=2^{2M}\cdot D_0$ with $D_0 \in \Oc$.

Assume also that $D, D_0$ and $K$ enjoy the following properties.
\begin{itemize}
\item[(i)]
$D$ is not a square in $K$ and
$$D_0-1 \in 4\Oc.$$
\item[(ii)]
The class number of $K$ is odd (e.g., $\Oc$ is a principal ideal
domain).
\item[(iii)]
Either $K$ is totally imaginary, i.e., it does not admit an
embedding into the field of real numbers or $K$ is totally real and
$D_0$ is totally positive.
\end{itemize}

Then:
\begin{itemize}
\item[(a)]
The quadratic extension $K(\sqrt{\Delta(u)})/K$ is unramified at every prime divisor of $2$. The Galois extension $K(\RR_u)/K(\sqrt{\Delta(u)})$ is unramified at every prime ideal $\b$ of odd residual characteristic.

\item[(b)]
There exists a maximal ideal $\b \subset \Oc$ with residue field $k(\b)$ of odd characteristic  that enjoys the following properties.

\begin{itemize}
\item[(i)]
$D_0 \in \b$, the polynomial
$$u(x)\bmod \b \in k(\b)[x]$$
has exactly one multiple root and its multiplicity is $2$. In
addition, this root lies in $k(\b)$.
\item[(ii)]
The field extension $K(\RR_u)/K$ is ramified at $\b$ and the Galois
group
$$\Gal(K(\RR_u)/K)=\Gal(u/\K)\subset \Perm(\RR_u)$$
contains a transposition. In particular, if $\Gal(u/\K)$ is doubly
transitive then $$\Gal(u/\K)=\Perm(\RR_f)\cong \ST_n$$
and
$$\Gal(K(\RR_u)/K(\sqrt{\Delta(u)}))=\A_n.$$
\end{itemize}
\end{itemize}
\end{lem}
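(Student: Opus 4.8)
I will prove Lemma~\ref{ODDclass} by reducing it to the already-established machinery of Lemma~\ref{onlyOneii} and by imitating the arithmetic of the proof of Corollary~\ref{keycor}, adapting each step from $\Z$ to the ring of integers $\Oc$ of the number field $K$. The hypotheses marked $2^N\Oc\subset B\Oc+C\Oc$, $2^N\Oc\subset n\Oc+B\Oc$, $2^N\Oc\subset(n-1)\Oc+C\Oc$ are exactly what is needed to guarantee that conditions (i)--(iii) of Lemma~\ref{onlyOneii} hold for \emph{every} maximal ideal $\b$ of odd residual characteristic: if such a $\b$ contained any one of the sums $B\Oc+C\Oc$, etc., it would contain $2^N\Oc$, hence the odd prime below it would divide a power of $2$, which is absurd. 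So the local input of Lemma~\ref{onlyOneii} is available at all odd $\b$, and that is the engine driving both (a) and (b).

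\emph{Proof of (a).} Since $D_0\equiv 1\bmod 4$ and $D=2^{2M}D_0$, the element $\sqrt{D}$ generates the same quadratic extension as $\sqrt{D_0}$; the congruence $D_0-1\in 4\Oc$ is precisely the condition making $K(\sqrt{D_0})/K$ unramified at every prime dividing $2$ (one adjoins a root of $t^2+t+\frac{1-D_0}{4}$, which is separable mod any such prime). This gives the first assertion. For the second, at each prime $\b$ of odd residual characteristic either $\Delta(u)\notin\b$, and then $K(\RR_u)/K$ is unramified at $\b$ by Remark~\ref{trivialK} (so $K(\RR_u)/K(\sqrt{\Delta(u)})$ is too), or $\Delta(u)\in\b$, and then Lemma~\ref{onlyOneii}(c) applies to show $K(\RR_u)/K(\sqrt{\Delta(u)})$ is unramified at all primes over $\b$. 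Combining both cases yields (a).

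\emph{Proof of (b).} The key is to locate an odd prime $\b$ at which $D_0$ acquires a factor to odd order, for then $\b$ ramifies in $K(\sqrt{D})/K$. Here assumption (ii), that the class number is odd, and assumption (iii), the positivity/imaginarity hypothesis, must enter: they replace the elementary factorization $D_0=S^2S_0$ used over $\Z$ in Corollary~\ref{keycor}. The plan is to consider the fractional ideal factorization of $(D_0)$; since $D$ is not a square in $K$, the ideal $(D_0)$ cannot be a square of an ideal \emph{together with} a totally positive (or freely chosen, in the imaginary case) unit adjustment, so some odd prime $\b$ divides $(D_0)$ to an odd exponent. This is where the odd class number is used: it lets one pass between ``$(D_0)$ is a square ideal'' and ``$D_0$ is a square times a unit,'' and the signature condition (iii) controls the unit, ensuring $D_0$ is genuinely a non-square modulo squares. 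Once such a ramified odd $\b$ is found, $\Delta(u)\in\b$ forces $\bar u(x)$ to have a multiple root, Lemma~\ref{onlyOneii}(a,b) pins it down to a single double root lying in $k(\b)$, and Lemma~\ref{onlyOneii}(c) shows the inertia at $\b$ is generated by a transposition; finally Remark~\ref{double} upgrades double transitivity to $\Gal(u/K)=\ST_n$, whence $\Gal(K(\RR_u)/K(\sqrt{\Delta(u)}))=\A_n$ by Sect.~\ref{discrA}.

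\emph{Main obstacle.} The one genuinely new point, compared with the $\Z$-case of Corollary~\ref{keycor}, is the production of an odd prime dividing $D_0$ to odd multiplicity: over $\Q$ this was the trivial observation that a non-square integer $\equiv 1\bmod 4$ has such a prime factor, but over a general $\Oc$ the interplay between the ideal class group and the unit group obstructs the argument. I expect the careful handling of hypotheses (ii) and (iii)---translating ``$D$ is not a square in $K$'' into ``some odd $\b$ has odd valuation in $(D_0)$''---to be the crux, and everything else to follow the template already laid down in the proofs of Lemma~\ref{onlyOneii} and Corollary~\ref{keycor}.
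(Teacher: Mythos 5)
Your part (a) reproduces the paper's argument (the integrality of a root of $x^2-x+\frac{1-D_0}{4}$ gives unramifiedness above $2$, and Remark \ref{trivialK} plus Lemma \ref{onlyOneii}(c) handle the odd primes), and everything in your part (b) \emph{downstream} of the ramified odd prime is also the paper's argument. The genuine gap is exactly at the point you yourself identify as the crux: producing the odd prime $\b$ with $v_{\b}(D_0)$ odd. Your route is: if every odd prime divided $(D_0)$ to even exponent, then (since $D_0\equiv 1 \bmod 4\Oc$ makes $D_0$ prime to $2$) $(D_0)=\a^2$; odd class number forces $\a=(s)$ principal; so $D_0=\epsilon s^2$ with $\epsilon\in\Oc^{\times}$; and you then claim condition (iii) ``controls the unit,'' so that $D_0$, hence $D$, is a square, a contradiction. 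That last step fails. When $K$ is totally imaginary, (iii) imposes no condition at all on $\epsilon$, and non-square units exist even with odd class number: $i$ is not a square in $\Q(i)$ (class number $1$), since $\Q(i,\sqrt{i})=\Q(\zeta_8)$. When $K$ is totally real, total positivity of $\epsilon$ does not make it a square either: $2+\sqrt{3}$ is a totally positive non-square unit of $\Q(\sqrt{3})$, again of class number $1$. So hypotheses (ii)+(iii) alone cannot rule out $D_0=\epsilon s^2$ with $\epsilon$ a non-square unit; the congruence $D_0\equiv 1\bmod 4\Oc$, which your part (b) never invokes, is indispensable here.

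The paper avoids factoring $(D_0)$ altogether and argues globally: $E=K(\sqrt{D})$ is unramified above $2$ by part (a) and unramified at all infinite places by (iii); if $E/K$ were also unramified at every odd finite prime, then $E$ would be a quadratic subextension of the Hilbert class field of $K$, forcing $2\mid h_K$ and contradicting (ii). Hence some $\b$ of odd residual characteristic ramifies in $E/K$, so $v_{\b}(D)$ is odd, $D_0\in\b$, and Lemma \ref{onlyOneii} finishes as in your sketch (note also that it is the ramification of $K(\RR_u)/K$ at $\b$ --- which follows from $E\subseteq K(\RR_u)$ --- that excludes the trivial-inertia branch of Lemma \ref{onlyOneii}(c) and actually yields the transposition). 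Your approach can be repaired, but only by the same idea: if $D_0=\epsilon s^2$, then $K(\sqrt{D_0})=K(\sqrt{\epsilon})$ is unramified at all odd primes, above $2$ (by the mod-$4$ congruence), and at infinity (by (iii)), and such an everywhere-unramified quadratic extension contradicts the odd class number. In other words, the Hilbert class field argument is not optional bookkeeping to be replaced by a signature condition on units; it is the mechanism that makes (b) true.
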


\begin{proof}
Let us prove (a).
Clearly,
$$E:=K\left(\sqrt{D_0}\right)=K\left(\sqrt{D}\right)=K\left(\sqrt{\Delta(u)}\right)\subset K(\RR_u)$$
is a quadratic extension of $K$. Notice that
$$\theta=\frac{1+\sqrt{D_0}}{2} \in E$$
is a root of the quadratic equation
$$v_2(x):=x^2-x +\frac{1-D_0}{4} \in \Oc[x]$$
and therefore is an algebraic integer. In addition,
$$E=K(\theta).$$
If a maximal ideal $\b_2$ in $\Oc$ has residual characteristic $2$
then the quadratic polynomial
$$v_2(x)\bmod \b_2 =x^2-x +\left(\frac{1-D_0}{4}\right)\bmod\b_2 \in k(\b_2)[x]$$
has no multiple roots, because its derivative is a nonzero constant
$-1$. This implies that $E/K$ is unramified at all prime divisors of
$2$. On the other hand, the conditions of Lemma \ref{onlyOneii} hold for all maximal ideals
$\b$ of $\Oc$ with {\sl odd} residual characteristic.  Now Remark \ref{trivialK} and Lemma \ref{onlyOneii}(c) imply that the Galois extension $K(\RR_u)/K(\sqrt{\Delta(u)})$ is unramified at every $\b$ of odd residual characteristic. This proves (a).

In order to prove (b), notice that
the condition (iii) implies that either all archimedean places
of both $E$ and $K$ are complex or all archimedean places of both
$E$ and $K$ are real. This implies that $E/K$ is unramified at all
infinite primes. Since the class number of $K$ is odd, the classical
results about Hilbert class fields \cite[Ch. 2, Sect. 1.2]{Koch}
imply that there is a maximal ideal $\b \subset \Oc$  such that
$E/K=K(\sqrt{D})/K$ is {\sl ramified} at $\b$. Since $E/K$ is
unramified at all prime divisors of $2$, the residual characteristic
of $\b$ is {\sl odd}, i.e., $2\not\in\b$.
This implies that
$$\Delta(u) =D \in \b.$$
Since $D=2^{2M}\cdot D_0$ and $\b$ is a prime (actually, maximal) ideal in $\Oc$, we have $D_0 \in \b$.
It also follows that
$$u(x) \bmod \b \in k(\b)[x]$$
has a multiple root. Now we are in a position to apply Lemma
\ref{onlyOneii}.  Since $K(\RR_u)\supset E$, the field extension
$K(\RR_u)/K$ is {\sl ramified} at $\b$. Applying Lemma
\ref{onlyOneii}, we conclude that $u(x) \bmod \b$ has exactly one
multiple root, this root is double and lies in $k(\b)$. In addition,
$$\Gal(K(\RR_u)/K) \subset \Perm(\RR_u)$$
contains a transposition. This implies that if $\Gal(K(\RR_u)/K)$ is
doubly transitive then $\Gal(K(\RR_u)/K)$ coincides with
$\Perm(\RR_u)\cong \ST_n$. Of course, this implies that $\Gal(K(\RR_u)/K(\sqrt{\Delta(u)}))=\A_n$.
\end{proof}

\begin{sect}[{\bf Generalized Mori quadruples}]
\label{MoriK}

  Let us consider a quadruple
$(g, \p, \bb,\cc)$  where $g$ is a positive integer, $\p$ is a
maximal ideal in $\Oc$  while $\bb$ and $\cc$ are elements of $\Oc$
that enjoy the following properties.

\begin{itemize}
\item[(i)]
The residue field $k(\p)=\Oc/\p$ is a finite field of {\sl odd}
characteristic. If $q$ is the cardinality of $k(\p)$ then every
prime divisor of $g$ is also a divisor of $\frac{q-1}{2}$. In
particular, if $g$ is even then $(q-1)$ is divisible by $4$.
\item[(ii)]
The residue  $\bb \bmod \p$ is a primitive element of $k(\p)$, i.e.,
it has multiplicative order $q-1$. In particular,
$$\bb \Oc +\p=\Oc.$$

The conditions (i) and (ii) imply that for each prime divisor $d$ of
$g$ the residue $\bb \bmod \p$ is {\sl not} a $d$th power in
$k(\p)$. Since $(q-1)$ is even, $\bb \bmod \p$ is {\sl not} a square
in $k(\p)$. So, if $d$ is a prime divisor of $2g$ then $\bb \bmod
\p$ is {\sl not} a $d$th power in $k(\p)$. If $2g$ is divisible by
$4$ then $g$ is even and $(q-1)$ is divisible by $4$, i.e., $-1$ is
a square in $k(\p)$. It follows  that $-4 \bb \bmod \p$ is {\sl not}
a square in $k(\p)$. Thanks to Theorem 9.1 of \cite[Ch. VI, Sect.
9]{Lang}, the last two assertions imply that the polynomial
$$x^{2g}-\bb \bmod \p \in k(\p)[x]$$
is irreducible over $k(\p)$. This implies that its Galois group over (the finite field)
$k(\p)$ is an order $2g$ cyclic group.
\item[(iii)]
$\cc \in \p, \ \cc-1 \in 2 \Oc$ and $$\Oc= \ \bb\Oc + \cc\Oc=
\bb\Oc+(2g+1)\Oc= \ 2g \Oc+\cc \Oc.$$
\end{itemize}

We call such a quadruple a {\sl generalized Mori quadruple} (in $K$).

\begin{ex}
Suppose that $K$ and $g$ are given. By Dirichlet's Theorem about primes in arithmetic progressions, there is a prime $p$ that does {\sl not} divide $(2g+1)$ and is congruent to $1$ modulo $2g$ . (In fact, there are infinitely many such primes.) Clearly, $p$ is {\sl odd}. Let us choose a maximal ideal $\p$ of $\Oc$ that contains $p$ and denote by $q$ the cardinality of the finite residue field $k(\p)$. Then $\fchar(k(\p))=p$ and  $q$ is a power of $p$. This implies that $q-1$ is divisible by $p-1$ and therefore is divisible by $2g$. Let us choose a generator  $\tilde{\bb}\in k(\p)$ of the multiplicative cyclic group $k(\p)^{*}$.  Let $r$ be a nonzero integer that is {\sl relatively prime} to $(2g+1)$. (E.g., $r=\pm 1, \pm 2$.)
Using Chinese Remainder Theorem, one may find $\bb \in \Oc$ such that
$$\bb \bmod \p=\tilde{\bb}, \ \bb-r \in (2g+1)\Oc.$$
(Clearly, $\bb \not\in\p$.)
 Now the same Theorem allows us to find  $\cc\in \p\subset \Oc$ such that $\cc-1 \in 2g\bb\Oc$.
Then $(g, \p, \bb,\cc)$ is a generalized Mori quadruple in $K$.
\end{ex}

Let us consider the polynomials
$$F(x)=F_{g, \p, \bb,\cc}(x)=x^{2g+1}-\bb x - \frac{\cc}{4} \in \Oc\left[\frac{1}{2}\right][x]\subset K[x]$$
and
$$U(x)=2^{2g+1} F(x/2)=x^{2g+1}-2^{2g}\bb x-2^{2g-1}\cc \in
\Oc[x]\subset K[x].$$
\end{sect}

\begin{thm}
\label{MoriG} Let $(g, \p, \bb,\cc)$  be a generalized Mori
quadruple in $K$. Assume also that there exists a maximal ideal
$\b_2\subset \Oc$ of residual characteristic $2$ such that the
ramification index $e(\b_2)$ of $\b_2$  (over $2$) in $K/\Q$ is
relatively prime to $(2g+1)$. Then:

\begin{itemize}
\item[(i)]
The polynomial
$$F(x)=F_{g, \p, \bb,\cc}(x)\in K[x]$$
 is irreducible over $K_{\b_2}$ and therefore
over $K$. In addition, the Galois extension $K(\RR_{F})/K$ is ramified at $\b_2$.
\item[(ii)]
The transitive Galois group
$$\Gal(F/\K)=\Gal(K(\RR_{F})/K)\subset \Perm(\RR_{F})=\ST_{2g+1}$$
 contains a cycle of
length $2g$. In particular, $\Gal(F/\K)$ is doubly transitive and is
not contained in $\A_{2g+1}$, and $\Delta(F)$ is not a square in
$K$.
\item[(iii)]
Assume that $K$ is a totally imaginary number field with odd class
number.  Then $\Gal(F/K)=\Perm(\RR_{F})$. If, in addition, $g>1$
then  $\End(J(C_F))=\Z$.
\item[(iv)]
Assume that $K$ is a totally imaginary number field with odd class
number and $g>1$. Then:

\begin{enumerate}
\item[(1)]
 for all primes $\ell$ the image
$\rho_{\ell,F}(\Gal(K))$ is an open subgroup of finite index in
$\Gp(T_{\ell}(J(C_F)),e_{\ell})$.
\item[(2)]
Let $L$ be a number field  that contains $K$ and $\Gal(L)$ be the
absolute Galois group of $L$, which we view as an open subgroup of
finite index in $\Gal(L)$. Then for all but finitely many primes
$\ell$ the image $\rho_{\ell,F}(\Gal(L))$ coincides with
$\Gp(T_{\ell}(J(C_F)),e_{\ell})$.
\end{enumerate}
\end{itemize}
\end{thm}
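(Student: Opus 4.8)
The plan is to run everything through the companion polynomial $U(x)=x^{2g+1}-2^{2g}\bb x-2^{2g-1}\cc\in\Oc[x]$, whose roots are twice those of $F$ and which therefore has the same splitting field and Galois group as $F$; irreducibility and ramification statements transfer between the two, while $U$ has the advantage of integral coefficients. I would prove the four parts in the order stated, each reducing to a result already available earlier in the paper.

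For (i) I would compute the $\b_2$-adic Newton polygon of $U$. The key preliminary observation is that $\cc$ is a unit at $\b_2$: from $\cc-1\in 2\Oc\subset\b_2$ we get $\cc\equiv 1\pmod{\b_2}$, so $v_{\b_2}(\cc)=0$. Writing $v=v_{\b_2}$ normalized by $v(\pi_{\b_2})=1$ and $e=e(\b_2)$, so $v(2)=e$, the polygon has endpoints $(0,(2g-1)e)$ and $(2g+1,0)$, and the middle point $(1,2ge+v(\bb))$ lies above the connecting segment because $v(\bb)\ge 0$ and $(2g-1)/(2g+1)<1$; hence the polygon is a single segment of slope $-(2g-1)e/(2g+1)$. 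Since $2g-1$ is odd, $\gcd(2g-1,2g+1)=1$, and combined with the hypothesis $\gcd(e,2g+1)=1$ this gives $\gcd((2g-1)e,2g+1)=1$, so the Eisenstein--Dumas criterion \cite{Mott,Gao} yields irreducibility of $U$, hence of $F$, over $K_{\b_2}$ and a fortiori over $K$. The same slope computation shows each root has $v$-valuation $(2g-1)e/(2g+1)$ with denominator $2g+1$, forcing $K_{\b_2}(\alpha)/K_{\b_2}$ to be totally ramified of degree $2g+1$, so $K(\RR_F)/K$ is ramified at $\b_2$. This is the step where the new hypothesis on $e(\b_2)$ is consumed, and I expect it to be the main obstacle, since it is the only genuinely new ingredient beyond Mori's $K=\Q$ case (where $e=1$).

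For (ii) I would reduce modulo $\p$: because $\cc\in\p$ one gets $U(x)\bmod\p=x\,(x^{2g}-2^{2g}\bb)$. The substitution $x\mapsto 2y$, legitimate since $2$ is a unit mod $\p$, together with condition (ii) of the generalized Mori quadruple shows $x^{2g}-2^{2g}\bb$ is irreducible over $k(\p)$; using $2g\notin\p$ (from $2g\Oc+\cc\Oc=\Oc$) and that $\bb$ is a unit, $U\bmod\p$ is separable with one linear and one irreducible degree-$2g$ factor. By Dedekind's theorem a Frobenius element at $\p$ acts on $\RR_F$ as a $2g$-cycle fixing one root. Since $F$ is irreducible by (i), $\Gal(F/K)$ is transitive, and a transitive subgroup of $\ST_{2g+1}$ whose point stabilizer contains a $2g$-cycle on the remaining points is doubly transitive; as a $2g$-cycle is an odd permutation, $\Gal(F/K)\not\subset\A_{2g+1}$ and $\Delta(F)$ is not a square in $K$ by Sect.~\ref{discrA}.

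For (iii) I would apply Lemma \ref{ODDclass} to $u=U$. The coprimality hypotheses hold because, for every odd $\b$, the relations $\bb\Oc+\cc\Oc=\bb\Oc+(2g+1)\Oc=2g\Oc+\cc\Oc=\Oc$ and the fact that $2$ is a unit mod $\b$ prevent $\b$ from swallowing the relevant pairs; the discriminant computation of Sect.~\ref{Dmori} with $b,pc$ replaced by $\bb,\cc$ gives $D=\Delta(U)=2^{2g(2g-1)}D_0$ with $D_0\equiv 1\pmod{4\Oc}$ (using $\cc\equiv 1\pmod{2\Oc}$), exactly as in (\ref{f3}) and (\ref{f4}); $D$ is not a square since $\Delta(F)$ is not and the two differ by a square; and $K$ is totally imaginary with odd class number. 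Lemma \ref{ODDclass}(b) then produces a transposition in $\Gal(F/K)$, which by Remark \ref{double} upgrades the double transitivity of (ii) to $\Gal(F/K)=\ST_{2g+1}$; for $g>1$, Theorem \ref{endo} applied to $F$ gives $\End(J(C_F))=\Z$. Finally, (iv) follows the argument of Section \ref{monodromy} verbatim: Lemma \ref{ODDclass}(b) provides a prime $\b$ of odd residual characteristic at which $F\bmod\b$ has exactly one double root and all others simple, so $J(C_F)$ has semistable reduction of toric dimension $1$ there; together with $\Gal(F/K)=\ST_{2g+1}$, $\End(J(C_F))=\Z$, and the image of $\rho_{\ell,F}$ lying in $\Gp(T_\ell(J(C_F)),e_\ell)$, assertion (1) follows from \cite[Th. 4.3]{ZarhinTAMS} and assertion (2) from \cite[Th. 1]{Hall}.
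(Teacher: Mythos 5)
Your proposal is correct and follows essentially the same route as the paper: Eisenstein--Dumas via the $\b_2$-adic Newton polygon for (i), reduction modulo $\p$ producing a $2g$-cycle (hence double transitivity) for (ii), Lemma \ref{ODDclass} applied to $U(x)$ with $n=2g+1$, $B=-2^{2g}\bb$, $C=-2^{2g-1}\cc$ and the congruence $D_0\equiv 1 \bmod 4\Oc$ for (iii), and the toric-dimension-$1$ semistable place combined with \cite[Th.~4.3]{ZarhinTAMS} and \cite[Th.~1]{Hall} for (iv). The only cosmetic deviation is that in (i) you compute the Newton polygon of $U$ (single segment of slope $-(2g-1)e(\b_2)/(2g+1)$), whereas the paper works with $F$ directly (segment from $(0,-2e(\b_2))$ to $(2g+1,0)$); both computations yield irreducibility over $K_{\b_2}$ and total ramification at $\b_2$, so the arguments are equivalent.
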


\begin{rem}
\label{twoK}
 If $K$ is a quadratic field then for every maximal ideal
$\b_2 \subset \Oc$ (with residual characteristic $2$) the ramification index $e(\b_2)$ of $\b_2$ in $K/\Q$ is
either $1$ or $2$: in both cases it is relatively prime to odd
$(2g+1)$. This implies that if $K$ is an {\sl imaginary quadratic
field} with {\sl odd class number} then all the conclusions of Theorem
\ref{MoriG} hold for every generalized Mori quadruple $(g, \p,
\bb,\cc)$. In particular,  the Galois extension $K(\RR_F)/K$ is {\sl ramified} at {\sl every} $\b_2$.

One may find the list of imaginary quadratic fields with {\sl small} ($\le 23$) odd class number  in \cite[pp. 322--324]{Arno};
see also \cite[Table 4 on p. 936]{Watkins}.
\end{rem}

\begin{proof}[Proof of Theorem \ref{MoriG}]
The $\b_2$-adic Newton polygon of  $F(x)$ consists of one {\sl
segment} that connects the points $(0,-2e(\b_2))$ and $(2g+1,0)$,
which are its only integer points, because $e(\b_2)$ and $(2g+1)$
are relatively prime and therefore $2e(\b_2)$ and $(2g+1)$ are
relatively prime. Now the irreducibility of $F(x)$ over $K_{\b_2}$
follows from Eisenstein--Dumas Criterion \cite[Cor. 3.6 on p.
316]{Mott}, \cite[p. 502]{Gao}. This proves (i). It also proves that the Galois extension $K(\RR_F)/K$ is {\sl ramified} at $\b_2$.

In order to prove (ii), let us consider the reduction
$$\tilde{F}(x)=F(x)\bmod \p
\Oc\left[\frac{1}{2}\right]=x^{2g+1}-\tilde{\bb} x \in k(\p)[x]$$
where
$$\tilde{\bb}=\bb \bmod \p \in k(\p).$$
So,
$$\tilde{F}(x)=x(x^{2g}-\tilde{\bb}) \in k(\p)[x].$$
We have already seen (Sect. \ref{MoriK}) that $x^{2g}-\tilde{\bb}$
is irreducible over $k(\p)$ and its Galois group is an order $2g$
cyclic group. We also know that $\tilde{\bb} \ne 0$ and therefore
the polynomials $x$ and $x^{2g}-\tilde{\bb}$ are relatively prime.
This implies that $K(\RR_F)/K$ is unramified at $\p$ and a
corresponding {\sl Frobenius element} in $\Gal(K(\RR_F)/K)\subset
\Perm(\RR_F)$ is a cycle of length $2g$. This proves (ii).  (Compare
with arguments on p. 107 of \cite{Mori}.)

The map $\alpha \mapsto 2\alpha$ is a $\Gal(K)$-equivariant
bijection between the sets of roots $\RR_F$ and $\RR_U$, which
induces a group isomorphism between permutation groups
$\Gal(\RR_F)\subset \Perm(\RR_F)$ and $\Gal(\RR_U)\subset
\Perm(\RR_U)$. In particular, the double transitivity of
$\Gal(\RR_F)$ implies the double transitivity of $\Gal(\RR_U)$. On
the other hand,
$$\Delta(U)=2^{(2g+1)2g}\Delta(F)=\left[2^{(2g+1)g}\right]^2\Delta(F).$$
This implies that $\Delta(U)$ is {\sl not} a square in $K$ as well.
The discriminant $\Delta(U)$ is given by the formula (Remark
\ref{discr})
$$D: =\Delta(U)=(-1)^{(2g+1)2g/2} (2g+1)^{2g+1} [-2^{2g-1}\cc]^{2g}+ (-1)^{2g(2g-1)/2} (2g)^{2g}
[-2^{2g}\b]^{2g+1}$$
$$=(-1)^{g}2^{2g(2g-1)}\left[(2g+1)^{2g+1}\cc^{2g}-2^{6g}g^{2g}\b^{2g+1}\right]=$$
$$2^{2[g(2g-1)]}\left\{(-1)^{g}\left[(2g+1)^{2g+1}\cc^{2g}-2^{6g}g^{2g}\b^{2g+1}\right]\right\}.$$
We have
$$D=2^{2M} D_0$$
where $M=g(2g-1)$ is a positive integer and
$$D_0=(-1)^{g}\left[(2g+1)^{2g+1}\cc^{2g}-2^{6g}g^{2g}\b^{2g+1}\right] \in
\Oc.$$ Since $\cc-1 \in 2\Oc$, we have $\cc^2-1 \in 4\Oc$ and
$$D_0 \equiv (-1)^{g} (2g+1)^{2g+1} \bmod 4\Oc.$$
Since $(2g+1)^{2g}=\left[(2g+1)^2\right]^g \equiv 1 \bmod 4$, we
conclude that
$$D_0 \equiv (-1)^{g} (2g+1) \bmod 4 \Oc.$$
This implies that
$$D_0-1 \in 4\Oc.$$
Applying Lemma \ref{ODDclass} to
$$n=2g+1, B=-2^{2g}\bb, C= -2^{2g-1}\cc, u(x)=U(x), M=g(2g-1), N=2g,$$
 we conclude that
doubly transitive $\Gal(U/K)$ coincides with $\Perm(\RR_U)$ and
therefore $\Gal(F/K)$ coincides with $\Perm(\RR_F)\cong \ST_{2g+1}$.
If $g>1$ then Theorem \ref{endo} tells us that $\End(J(C_F))=\Z$.
This proves (iii). We also obtain that there exists a maximal ideal
$\b \subset \Oc$ with odd residual characteristic such that
$$U(x) \bmod \b \in k(\b)[x]$$
has exactly one multiple root, this root is double and lies in
$k(\b)$.  Since $$F(x)=\frac{U(2x)}{2^{2g+1}},$$ we obtain that
$$F(x)\bmod \b \Oc\left[\frac{1}{2}\right] =\frac{U(2x)}{2^{2g+1}}\bmod \b \in k(\b)[x].$$
This implies that the polynomial $F(x)\bmod \b
\Oc\left[\frac{1}{2}\right] \in k(\b)[x]$ has exactly one multiple
root, this root is double and lies in $k(\b)$.
The properties of $F(x) \bmod \b \Oc\left[\frac{1}{2}\right]$ imply
that $J(C_F)$ has a {\sl semistable reduction} at $\b$ with {\sl
toric dimension} $1$. Now it follows from \cite[Th. 4.3]{ZarhinTAMS}
that for for all primes $\ell$ the image $\rho_{\ell,F}(\Gal(K))$ is
an open subgroup of finite index in
$\Gp(T_{\ell}(J(C_F)),e_{\ell})$. It follows from \cite[Th. 1]{Hall}
that if $L$ is a number field containing $K$ then for all but
finitely many primes $\ell$ the image $\rho_{\ell,F}(\Gal(L))$
coincides with $\Gp(T_{\ell}(J(C_F)),e_{\ell})$.  This proves (iv).
\end{proof}

\begin{cor}
We keep the notation of Theorem \ref{MoriG}.
Let $K$ be an imaginary quadratic field with odd class
number.  Let $(g, \p, \bb,\cc)$  be a generalized Mori
quadruple in $K$ and
$$F(x)=F_{g, \p, \bb,\cc}(x) \in K[x].$$
Then
$$\Gal(K(\RR_F)/K(\sqrt{\Delta(F)}))=\A_{2g+1}$$
and
the Galois extension $K(\RR_F)/K(\sqrt{\Delta(F)})$ is unramified everywhere   outside $2$ and ramified at all prime divisors of $2$.
\end{cor}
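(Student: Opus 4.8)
The plan is to assemble the Corollary from Theorem \ref{MoriG}, Lemma \ref{ODDclass} and the elementary discriminant formalism of Section \ref{discrA}, the only genuinely new ingredient being a tower computation that locates the ramification above $2$. First I would invoke Remark \ref{twoK}: since $K$ is imaginary quadratic with odd class number, every maximal ideal $\b_2$ of $\Oc$ above $2$ has ramification index $1$ or $2$ over $\Q$, hence prime to the odd number $2g+1$, so all the conclusions of Theorem \ref{MoriG} are available for our quadruple. In particular $\Gal(F/K)=\Perm(\RR_F)\cong\ST_{2g+1}$ by part (iii), and $K(\RR_F)/K$ is ramified at \emph{every} prime divisor $\b_2$ of $2$ (the last assertion of Remark \ref{twoK}, coming from the Eisenstein--Dumas/Newton polygon argument of Theorem \ref{MoriG}(i)).

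With $\Gal(F/K)=\ST_{2g+1}$ in hand, the alternating-group identification is immediate: by the general fact recorded at the end of Section \ref{discrA}, $\Gal(F/K)=\ST_{2g+1}$ forces $\Gal(K(\RR_F)/K(\sqrt{\Delta(F)}))=\A_{2g+1}$, where $K(\sqrt{\Delta(F)})$ is exactly the fixed field of $\A_{2g+1}\subset\ST_{2g+1}$, i.e. the unique quadratic subextension of $K(\RR_F)/K$.

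For the behaviour away from $2$ I would pass to $U(x)=2^{2g+1}F(x/2)$. The map $\alpha\mapsto 2\alpha$ carries $\RR_U$ onto $\RR_F$ $\Gal(K)$-equivariantly, so the splitting fields coincide, $K(\RR_U)=K(\RR_F)$; and since $\Delta(U)=\left[2^{(2g+1)g}\right]^2\Delta(F)$ differs from $\Delta(F)$ by a square, $K(\sqrt{\Delta(U)})=K(\sqrt{\Delta(F)})$. The hypotheses of Lemma \ref{ODDclass} for $U$ were already checked inside the proof of Theorem \ref{MoriG} (the relations $D=2^{2M}D_0$ with $M=g(2g-1)$, the congruence $D_0-1\in 4\Oc$, the non-squareness of $D$, and the three coprimality conditions with $N=2g$), so Lemma \ref{ODDclass}(a) applies and delivers two facts at once: the quadratic extension $K(\sqrt{\Delta(F)})/K$ is unramified at every prime above $2$, and $K(\RR_F)/K(\sqrt{\Delta(F)})$ is unramified at every prime of odd residual characteristic, i.e. everywhere outside $2$.

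The only remaining point is ramification at $2$, which I would extract by a tower argument; this is the step that requires a little care, since it is precisely here that one must see that the \emph{entire} ramification of $K(\RR_F)/K$ above $2$ has been pushed into the top layer of the tower, the quadratic layer being forced to be unramified by $D_0\equiv 1\bmod 4\Oc$. Fix any prime $\mathfrak q$ of $K(\sqrt{\Delta(F)})$ dividing $2$, let $\b_2=\mathfrak q\cap\Oc$ be the prime of $K$ beneath it, and choose a prime $\mathfrak P$ of $K(\RR_F)$ above $\mathfrak q$. Multiplicativity of ramification indices in $K\subset K(\sqrt{\Delta(F)})\subset K(\RR_F)$ gives $e(\mathfrak P/\b_2)=e(\mathfrak P/\mathfrak q)\,e(\mathfrak q/\b_2)$. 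By the previous paragraph $e(\mathfrak q/\b_2)=1$, whereas $e(\mathfrak P/\b_2)>1$ because $K(\RR_F)/K$ is ramified at $\b_2$; hence $e(\mathfrak P/\mathfrak q)>1$, so $K(\RR_F)/K(\sqrt{\Delta(F)})$ is ramified at $\mathfrak q$. As $\mathfrak q$ was an arbitrary prime divisor of $2$, this extension is ramified at all prime divisors of $2$, completing the proof.
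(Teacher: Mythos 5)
Your proposal is correct and takes essentially the same route as the paper: Remark \ref{twoK} and Theorem \ref{MoriG}(iii) yield $\Gal(F/K)=\ST_{2g+1}$ (hence $\A_{2g+1}$ over $K(\sqrt{\Delta(F)})$ via Section \ref{discrA}) together with ramification of $K(\RR_F)/K$ at every prime of $K$ above $2$; Lemma \ref{ODDclass}(a) applied to $U(x)=2^{2g+1}F(x/2)$ handles both the quadratic layer at $2$ and all primes of odd residual characteristic; and your explicit computation with $e(\P/\b_2)=e(\P/\q)\,e(\q/\b_2)$ is just a spelled-out form of the paper's remark that all the extensions involved are Galois --- a property you do implicitly need in order to conclude $e(\P/\b_2)>1$ for \emph{every} prime $\P$ above $\b_2$, not merely for some. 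The one point you omit is the archimedean places: since ``unramified everywhere outside $2$'' includes the infinite places, the paper adds a sentence observing that $K$, and hence $K(\sqrt{\Delta(F)})$, is totally imaginary, so no infinite place can ramify --- a trivial addition you should make explicit.
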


\begin{proof}
As above, let us consider the polynomial
$$U(x)=2^{2g+1} F(x/2)=x^{2g+1}-2^{2g}\bb x-2^{2g-1}\cc \in
\Oc[x]\subset K[x].$$
We have
$$K(\RR_F)=K(\RR_U), \ K(\sqrt{\Delta(F)})=K(\sqrt{\Delta(U)}).$$
Since
$$\ST_{2g+1}=\Perm(\RR_U)=\Gal(U/K)=\Gal(K(\RR_U)/K),$$
we have
$$\Gal(K(\RR_U)/K(\sqrt{\Delta(U)}))=\A_{2g+1}.$$
It follows from Remark \ref{twoK} that the Galois extension $K(\RR_U)/K$ is {\sl ramified} at every prime divisor of $2$ (in $K$). On the other hand, Lemma \ref{ODDclass}(a) (applied to $u(x)=U(x)$) tells us that the quadratic extension $K(\sqrt{\Delta(U)})/K$ is {\sl unramified} at every prime divisor of $2$ (in $K$). Since all the field extensions involved are Galois,
$K(\RR_U)/K(\sqrt{\Delta(U)})$ is {\sl ramified} at every prime divisor of $2$ (in $K(\sqrt{\Delta(U)})$).

Since $K$ is purely {imaginary}, $K(\sqrt{\Delta(U)})$ is also purely imaginary and therefore (its every field extension, including) $K(\RR_U)$ is unramified at all infinite places (in $K(\sqrt{\Delta(U)})$).

Remark \ref{trivialK} and Lemma \ref{ODDclass}(a) (applied to $u(x)=U(x)$) imply that the field extension  $K(\RR_U)/K(\sqrt{\Delta(U)})$ is {\sl unramified} at all maximal ideals $\b$ in $\Oc$ with odd residual characteristic.
\end{proof}

\section{Corrigendum
to \cite{ZarhinCEJM}}

\begin{itemize}
\item
Page 660, the 6th displayed formula: insert $\subset$ between
$\End_{\Gal(K)}V_{\ell}(X)$ and $\End_{\Q_{\ell}}V_{\ell}(X)$.
\item
Page 662, Theorem 2.6, line 3: $r_1$ should be $r_2$.
\item
Page 664, Remark 2.16:  The reference to [23, Theorem 1.5] should be
replaced by [23, Theorem 1].

\item
Page 664, Theorem 2.20.
The following additional condition on
$\ell$ was inadvertently omitted.

\noindent `` (iii) {\sl If $C$ is the center of $\End(X)$ then
$C/\ell C$ is the center of} $\End(X) / \ell  \End(X)$."

In addition,  ``be" on the last line should be ``is".
\item
Page 666, Theorem. 3.3. Line 2: $\ell$ should be assumed to be in
$P$, i.e.  one should read ``{\sl Then for all but finitely many}
$\ell \in P$ . . .". In addition,   $X_n$ should be $X_{\ell}$
throughout lines 3--6.
\item
Page 668, Lemma 3.9, line 1: $\Isog_P$ should be $\Is_P$.
\item
Page 668, Theorem 3.10, line 1: replace $\Isog_P((X \times X^t)^8,
K, 1)$ by $\Is_P((X \times X^t)^4, K, 1)$.
\item
Page 670, Sect. 5.1, the first displayed formula: $t$ should be $g$.
\item
Page 672, line 9: $X'_{\ell}$ should be $X_{\ell}$.
\end{itemize}

(I am grateful to Kestutis Cesnavicius, who has sent me this list of
typos.)

\end{document}